\newcommand{\triplet}{\mathcal{W}(p)}
\newtheorem{theorem}{Theorem}[section]
\newtheorem{lemma}[theorem]{Lemma}
\newtheorem{proposition}[theorem]{Proposition}
\newtheorem{remark}[theorem]{Remark}
\newtheorem{definition}[theorem]{Definition}
\newtheorem{convention}[theorem]{Convention}
\begin{document}

\title{Fusion rules of Virasoro Vertex Operator Algebras}

\author {Xianzu Lin }

\date{ }
\maketitle
   {\small \it College of Mathematics and Computer Science, Fujian Normal University, }\\
    \   {\small \it Fuzhou, {\rm 350108}, China;}\\
      \              {\small \it Email: linxianzu@126.com}
\begin{abstract}

In this paper we prove the fusion rules of Virasoro vertex operator algebras $L(c_{1,q},0)$, for $q\geq1$.
Roughly speaking, we consider $L(c_{1,q},0)$ as the limit of $L(c_{n,nq-1},0)$, for $n\rightarrow\infty$,
and the fusion rules of $L(c_{1,q},0)$ follow as the limits of the fusion rules of $L(c_{n,nq-1},0)$.

\

Keywords: Fusion rules, Virasoro vertex operator algebras,

\

2000 MR Subject Classification: 17B69
\end{abstract}

\section{Introduction}
Among classical representation theory (of compact groups or semi-simple Lie algebras),
the most important problems are,

\begin{enumerate}
\item The classification problem: describe all the irreducible representations.
\item The Clebsch-Gordon problem: given irreducible representations $V$ and $W$,
describe the decomposition, with multiplicities, of the representation $V\otimes W$.
\end{enumerate}
For the representation theory of vertex operator algebras, the most important problem is also the classification of all the irreducible representations.
The difference is that for two irreducible modules $U$ and $V$ over a vertex operator algebra $A$, we can not define the tensor module of $U$ and $V$.
Nevertheless, we still have the analogue of the Clebsch-Gordon problem via the notion of intertwining operators.
In particular, for three irreducible modules $U$, $V$ and $W$ over a vertex operator algebra $A$,
we can define the fusion rule $\mathcal {N}^{W}_{U,V}$, the analogue of the Clebsch-Gordon coefficient.
As in the classical representation theory, the second most important problem in representation theory of vertex operator algebras is to
determine the fusion rules $\mathcal {N}^{W}_{U,V}$.

The Virasoro vertex operator algebras constitute one of the most important classes of vertex operator algebras.
In \cite{w} it was proved that the vertex operator algebras $L(c_{p,q},0)$ are rational,
where $(p,q)=1$, $p,q>1$ and $c_{p,q}=13-6(\frac{q}{p}+\frac{p}{q})$.
 Furthermore, the fusion rules of $L(c_{p,q},0)$ were proved in \cite{w} using the Frenkel-Zhu's formula (cf.\cite{fz}).
 In the case of $L(c_{1,q},0)$, we cannot prove the fusion rules as in \cite{w},
 for Frenkel-Zhu's formula cannot be applied to $L(c_{1,q},0)$ which is non-rational.
  The fusion rules of $L(c_{1,1},0)$ were first proved in \cite{mi} and further extended in \cite{dj1}.

In this paper we prove the fusion rules of $L(c_{1,q},0)$ for all $q\geq1$.
Our method is totally different from those of \cite{dj1,mi}.
Roughly speaking, we consider $L(c_{1,q},0)$ as the limit of $L(c_{n,nq-1},0)$, for $n\rightarrow\infty$,
and the fusion rules of $L(c_{1,q},0)$ follow as the limits of the fusion rules of $L(c_{n,nq-1},0)$. Formally, the fusion ring of $L(c_{1,q},0)$ can be written as
$$ L(c_{1,q},h_{i_{1},s_{1}})\otimes L(c_{1,q},h_{i_{2},s_{2}})=\bigoplus_{i\in A_{i_{1},i_{2}}\\ s\in A_{s_{1},s_{2}}} L(c_{1,q},h_{i,s}),$$
where $A_{m,n} =\{m+n-1,m+n-3,\cdots,|m-n|+1\}$ for $m,n>0$.
Many special cases of this result have already been applied in several papers (cf. \cite{am,am1,lin}).

This paper is structured as follows: In Section 2 we give some preliminary results about the representation theory of Virasoro vertex operator algebras.
In Section 3, using the easy part of Frenkel-Zhu's formula,
we get an upper bound for the fusion rules of $L(c_{1,q},0)$. In Section 4 we establish the fusion rules of $L(c_{1,q},0)$ by the limit method.
 In Section 5, we further extend the fusion rules of $L(c_{1,q},0)$ to include some other cases.
Throughout this paper, we assume that the reader is familiar with the axiom theory vertex operator algebras and modules. For more information, see\cite{fhl,ll}.

\section{vertex operator algebras and modules associated to Virasoro algebra}
In this section, we give a short review of vertex operator algebras and modules associated to Virasoro algebra, details can be found in \cite{ff1,ff2,ik,kr,ll}.
First, recall that Virasoro algebra is the Lie algebra $Vir$ with basis $\{L_{n}\mid n\in\mathbb{Z}\}\cup\{C\}$, satisfying
$$[L_{m},L_{n}]=(m-n)L_{m+n}+\frac{m^{3}-m}{12}\delta_{m+n,0}C$$
and
$$[Vir, C]=\{0\}.$$
Define the following subalgebras of $Vir$:
 $$Vir^{\pm}=\coprod_{\pm n>0}\mathbb{C}L_{n};\  Vir^{0}=L_{0}\oplus C;$$
 $$Vir^{\geq0}=Vir^{+}\oplus Vir^{0};\   Vir^{\geq-1}=Vir^{\geq0}\oplus\mathbb{C}L_{-1}. $$
Let $c$ and $h$ be two complex numbers and let $\mathbb{C}v_{c,h}$ be the one dimensional $Vir^{\geq0}$--module
 with $C$ and $L_{0}$ acting as the scalars $c$ and $h$, and with $Vir^{+}$ acting trivially.
 Set $$M(c,h)=U(Vir)\otimes_{U(Vir^{\geq0})}\mathbb{C}v_{c,h}$$ and call it the Verma module with central charge $c$ and highest weight $h$.
 For any $\mathbb{I}=(1^{r_{1}}2^{r_{2}}\cdots n^{r_{n}})\in\mathcal {P}_{n}$, set
$$e_{\mathbb{I}}=L^{r_{n}}_{-n}\cdots L^{r_{2}}_{-2}L^{r_{1}}_{-1}\in U(Vir^{-})_{-n}.$$
Then, $\{e_{\mathbb{I}}\cdot v_{c,h}|\mathbb{I}\in\mathcal {P}_{n}\}$ forms a basis of the weight subspace $M(c,h)_{h+n}$.
Let $M'(c,h)$ be the largest proper submodule of  $M(c,h)$. Then $L(c,h)=M(c,h)/M'(c,h)$ is an irreducible $Vir$--module.

 We recall the following proposition.
\begin{proposition}\label{rw}
(cf.\cite{rw})Set $n=\alpha\beta$, $c=c(t)=13-6t-6t^{-1}$ and
$h=h_{\alpha,\beta}(t)=\frac{1}{4}(\alpha^{2}-1)t-\frac{1}{2}(\alpha\beta-1)+\frac{1}{4}(\beta^{2}-1)t^{-1}$
for $\alpha,\beta\in\mathbb{Z}_{>0}$. Then there exists
$$S_{n}=\sum_{\mathbb{I}\in\mathcal {P}_{n}}f_{\mathbb{I}}(c,h)e_{\mathbb{I}}\in U(Vir^{-})_{-n}$$
 such that $S_{n}v_{c,h}\in M(c,h)_{h+n}$ is a singular vector, where $f_{\mathbb{I}}(x,y)\in \mathbb{C}[x,y]$
  and $f_{\mathbb{I}_{0}}(x,y)=1$ for $\mathbb{I}_{0}=(1^{n})$.
\end{proposition}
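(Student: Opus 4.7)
My plan is to follow the classical Kac / Feigin--Fuchs / Rocha-Caridi-Wallach approach, producing the singular vector as (the rescaling of) a column of the adjugate of the Shapovalov Gram matrix, and then upgrading to polynomial coefficients in $c,h$.

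First I would reformulate the singular-vector condition. Since $Vir^{+}$ is generated by $L_{1}$ and $L_{2}$, a vector at weight $h+n$ in $M(c,h)$ is singular iff it is annihilated by both. For the parameters $(c,h)=(c(t),h_{\alpha,\beta}(t))$, and generic $t$, no singular vectors exist at intermediate weights, so the space of singular vectors at weight $h+n$ coincides with the radical of the Shapovalov (contravariant) form on $M(c,h)_{h+n}$. Hence it suffices to find a nonzero element of this radical whose coordinate along $e_{\mathbb{I}_{0}}=L_{-1}^{n}$ is $1$.

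Next I would work in the universal setting over $R=\mathbb{C}[c,h]$, where the Shapovalov Gram matrix $B_{n}(c,h)=(\langle e_{\mathbb{I}}v,e_{\mathbb{J}}v\rangle)$ at level $n$ is a $p(n)\times p(n)$ matrix with entries in $R$. By the Kac determinant formula,
$$\det B_{n}(c,h)=\kappa_{n}\prod_{1\le rs\le n}\phi_{r,s}(c,h)^{p(n-rs)},$$
where $\phi_{r,s}\in R$ is the polynomial whose vanishing locus is the Kac curve $\{(c(t),h_{r,s}(t)):t\in\mathbb{C}^{\times}\}$. For $rs=\alpha\beta=n$, the factor $\phi_{\alpha,\beta}$ appears with multiplicity $p(0)=1$; so, on the curve $\phi_{\alpha,\beta}=0$, the matrix $B_{n}$ has rank exactly $p(n)-1$ generically and the radical is one-dimensional. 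Now take the adjugate $A=\mathrm{adj}(B_{n})$, satisfying $B_{n}A=(\det B_{n})I$. Each column of $A$ reduces modulo $\phi_{\alpha,\beta}$ to a radical vector. Selecting the $\mathbb{I}_{0}$-th column, its $\mathbb{I}_{0}$-entry is the cofactor $g=\det(B_{n}^{\widehat{\mathbb{I}_{0}}})$. Since $\phi_{\alpha,\beta}$ occurs to multiplicity only $1$ in $\det B_{n}$, it cannot also divide $g$ (otherwise cofactor expansion would force $\phi_{\alpha,\beta}^{2}\mid \det B_{n}$), so $g\not\equiv 0\pmod{\phi_{\alpha,\beta}}$ and the reduced column is a nonzero singular vector.

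The main obstacle is the polynomial normalization $f_{\mathbb{I}_{0}}(c,h)=1$. Naively dividing the adjugate column by $g$ only gives coefficients in the fraction field of $R/(\phi_{\alpha,\beta})$, not polynomials in $c,h$. To secure polynomial coefficients I would invoke an explicit construction of the singular vector, e.g., the Malikov--Feigin--Fuchs closed form obtained from products of screening operators in the free-field (Coulomb-gas) realization, which directly exhibits a representative in $U(Vir^{-})_{-n}$ with $L_{-1}^{n}$-coefficient equal to $1$ and each other coefficient an explicit polynomial in $c,h$. Alternatively, following \cite{rw}, one builds the singular vector inductively by solving $L_{1}S_{n}v_{c,h}=L_{2}S_{n}v_{c,h}=0$ level by level, checking at each step that the required coefficients remain in $\mathbb{C}[c,h]$. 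This explicit polynomial construction is the essential technical step; once it is in hand, uniqueness of the radical on the Kac curve guarantees that this representative agrees (up to a unit on the curve) with the adjugate column, completing the proof.
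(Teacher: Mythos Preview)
The paper does not prove this proposition; it is recorded with a citation to \cite{rw} and used as a black box. So there is no argument in the paper to compare against beyond ``see the reference.'' That said, your sketch has a real gap at exactly the point that carries the content of the statement.

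Your justification that the leading cofactor $g=\det(B_{n}^{\widehat{\mathbb{I}_{0}}})$ is not divisible by $\phi_{\alpha,\beta}$ is incorrect. Cofactor expansion along a single row gives $\det B_{n}=\sum_{j}(B_{n})_{\mathbb{I}_{0},j}\,C_{\mathbb{I}_{0},j}$; if $\phi_{\alpha,\beta}\mid g=C_{\mathbb{I}_{0},\mathbb{I}_{0}}$ but not the other cofactors in that row, you only recover $\phi_{\alpha,\beta}\mid\det B_{n}$, which you already knew, not $\phi_{\alpha,\beta}^{2}\mid\det B_{n}$. The correct statement (e.g.\ via Smith normal form over a DVR) is that if \emph{every} $(p(n)-1)\times(p(n)-1)$ minor were divisible by $\phi_{\alpha,\beta}$ then $\phi_{\alpha,\beta}^{2}\mid\det B_{n}$; a single cofactor is not enough. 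You should also be careful with the multiplicity claim: since $h_{\alpha,\beta}(t)$ and $h_{\beta,\alpha}(t)$ parametrize the same curve in the $(c,h)$–plane (via $t\mapsto t^{-1}$), the irreducible polynomial in $\mathbb{C}[c,h]$ cutting out that curve can occur in the Kac determinant with multiplicity larger than $1$ when $\alpha\neq\beta$.

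More to the point, the nonvanishing of the $L_{-1}^{n}$–coefficient on the singular vector \emph{is} the proposition: that is precisely what lets one normalize to $f_{\mathbb{I}_{0}}\equiv 1$ with the remaining coefficients polynomial. Your adjugate discussion does not establish this; and your final paragraph concedes as much by deferring to the Malikov--Feigin--Fuchs formula or to the inductive scheme in \cite{rw}. That reduces your proposal to the same citation the paper makes. If you want an honest self-contained argument, the substance to supply is the inductive step: write $S_{n}=L_{-1}^{n}+\sum_{\mathbb{I}\neq(1^{n})}f_{\mathbb{I}}\,e_{\mathbb{I}}$, impose $L_{1}S_{n}v_{c,h}=L_{2}S_{n}v_{c,h}=0$, and show that in a suitable filtration the resulting linear system for the $f_{\mathbb{I}}$ is triangular with unit pivots, so each $f_{\mathbb{I}}$ is forced to lie in $\mathbb{C}[c,h]$.
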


Let $p$, $q$, $r$ and $s$ be positive integers, satisfying $(p,q)=1$, $r<p$ and $s<q$.
Let $c=c_{p,q}=13-6(\frac{q}{p}+\frac{p}{q})$ and $h=h_{p,q;r,s}=\frac{(sp-rq)^{2}-(p-q)^{2}}{4pq}$.
Then by Proposition \ref{rw}, $M(c,h)$ has two singular vectors $u_{p,q}^{r,s}$ and $v_{p,q}^{r,s}$, of weights $h+rs$
and $h+(p-r)(q-s)$ respectively. Moreover, the maximal submodule of $M(c,h)$ is
generated by $u_{p,q}^{r,s}$ and $v_{p,q}^{r,s}$.

Similarly, when $p=1$ and $h= h_{i,s}=\frac{(iq-s)^{2}-(q-1)^{2}}{4q}$ for some $i>0$, $0< s\leq q$,
$M(c_{1,q},h)$ has a singular vector of weight $ h+is$
which generates the maximal proper submodule of $M(c_{1,q},h)$. Moreover, $M(c_{1,q},h)$ is irreducible
when $h\neq h_{i,s}=\frac{(iq-s)^{2}-(q-1)^{2}}{4q}$ for any $i>0$, $0< s\leq q$.

Now consider $\mathbb{C}$ as an $Vir^{\geq-1}$--module with $C$ acting as the scalar $c$,
and with $Vir^{+}\oplus L_{0}\oplus L_{-1}$ acting trivially.
 Set $$V_{c}=U(Vir)\otimes_{U(Vir^{\geq-1})}\mathbb{C}.$$
  Then, it is well known that $V_{c}$ has a canonical structure of vertex operator algebra
   of central charge $c$ and with  $\omega=L(-2)\textbf{1}$ as conformal vector. In this way, $M(c,h)$ and $L(c,h)$
   are modules for $V_{c}$ viewed as a vertex algebra. Furthermore, $L(c,0)$, as a quotient of $V_{c}$, is a simple vertex operator algebra. Note that
$L(c,0)=V_{c}$ when $c\neq c_{p,q}$, where $p, q>1$ and $(p,q)=1$. If $c= c_{p,q}$ for some $p, q$ as above, then $L(c,0)\neq V_{c}$, and $L(c,h)$
 is a $L(c,0)$--module if and only if $h=h_{p,q;r,s}=\frac{(sp-rq)^{2}-(p-q)^{2}}{4pq}$
 for some positive integer $r,s$ satisfying $r<p$ and $s<q$ (cf.\cite{w}).

Now we introduce the definition of intertwining operator and fusion rule for a triple of modules of vertex operator algebra (cf.\cite{fhl}).

\begin{definition}
Let $W_{1}$, $W_{2}$ and $W_{3}$ be three modules over a vertex operator algebra $V$. A linear map $W_{1}\otimes W_{2}\rightarrow W_{3}\{x\}$ or equivalently,
$$W_{1}\rightarrow (Hom(W_{2},W_{3}))\{x\}$$
$$w\mapsto \mathcal {Y}(w,x)=\sum_{n\in\mathbb{Q}}w_{n}x^{-n-1}\ \ \ (where \ w_{n}\in Hom(W_{2},W_{3}))$$
is called an intertwining operator of type
\begin{math}
(\begin{array}{cccc}
\;\; W_{3}  \\
W_{1}\; W_{2}\\
\end{array})
\end{math}
if it satisfies:
\begin{enumerate}
\item[1] (The truncation property)For any $w_{1}\in W_{1}$, $w_{2}\in W_{2}$, $(w_{1})_{n}w_{2}=0$ for $n$ sufficiently large;

\item[2] (The $L_{-1}$-derivative formula)For any $w\in W_{1}$,
$$\mathcal {Y}(L_{-1}w,x)=\frac{d}{dx}\mathcal {Y}(w,x);$$

\item[3] (The Jacobi identity) For any $v\in V$ and $w_{1}\in W_{1}$,
$$x_{0}^{-1}\delta(\frac{x_{1}-x_{2}}{x_{0}})Y(v,x_{1})\mathcal {Y}(w_{1},x_{2}) $$$$
-x_{0}^{-1}\delta(\frac{x_{2}-x_{1}}{-x_{0}})\mathcal {Y}(w_{1},x_{2})Y(v,x_{1}) $$$$
=x_{2}^{-1}\delta(\frac{x_{1}-x_{0}}{x_{2}})\mathcal {Y}(Y(v,x_{0})w_{1},x_{2}).$$
\end{enumerate}
\end{definition}

Set
\begin{math}
I(\begin{array}{cccc}
\;\; W_{3}  \\
W_{1}\; W_{2}\\
\end{array})
\end{math}
 to be the vector space of all intertwining operators of type
\begin{math}
(\begin{array}{cccc}
\;\; W_{3}  \\
W_{1}\; W_{2}\\
\end{array})
\end{math}, its dimension $\mathcal {N}^{W_{k}}_{W_{i},W_{j}}$ is called the fusion rule of type \begin{math}
(\begin{array}{cccc}
\;\; W_{3}  \\
W_{1}\; W_{2}\\
\end{array})
\end{math}.

 The main result of this paper is the following:

 \begin{theorem}\label{main}
Let $i_{n}>0$, $0<s_{n}\leq q$ (n=1,2,3), and $A_{m,n} =\{m+n-1,m+n-3,\cdots,|m-n|+1\}$. Then
\[
\mathcal {N}^{L(c_{1,q},h_{i_{3},s_{3}})}_{L(c_{1,q},h_{i_{1},s_{1}}),L(c_{1,q},h_{i_{2},s_{2}})}\leq1.
\]
The equality hold if and only if $h_{i_{3},s_{3}}=h_{i,s}$ for some $i\in A_{i_{1},i_{2}}$ and\\
$s\in A_{s_{1},s_{2}}$.
\end{theorem}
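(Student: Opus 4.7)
My plan is to split the theorem into an upper bound (giving both $\mathcal{N}\leq 1$ and the ``only if'' direction of equality) and an existence argument (the ``if'' direction); these correspond to Sections 3 and 4 respectively.

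For the upper bound, I would apply the easy direction of Frenkel-Zhu's formula. Since the space of intertwining operators among $L(c_{1,q},0)$-modules injects into the space of intertwining operators for the same modules viewed over $V_{c_{1,q}}$, and since Zhu's algebra of $V_{c_{1,q}}$ is the polynomial ring in $[\omega]$, one obtains an injection from the intertwining space into a space of bilinear pairings on the top components $L(c_{1,q},h_{i_k,s_k})_{h_{i_k,s_k}}\cong\mathbb{C}$. The singular vector in $M(c_{1,q},h_{i_1,s_1})$ of weight $h_{i_1,s_1}+i_1 s_1$ given by Proposition \ref{rw} imposes an additional polynomial equation in $[\omega]$ relating the three weights, and analyzing this equation recovers both $\mathcal{N}\leq1$ and the constraint $h_{i_3,s_3}=h_{i,s}$ with $i\in A_{i_1,i_2}$, $s\in A_{s_1,s_2}$.

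For the existence part when the condition holds, I would follow the limit strategy announced in the abstract. Fix $(i_k,s_k)$ satisfying the condition. For each integer $n$ large enough that $i_1+i_2-1<n$, the modules $L(c_{n,nq-1},h_{n,nq-1;i_k,s_k})$ are irreducible modules over the rational vertex operator algebra $L(c_{n,nq-1},0)$, and Wang's fusion rules \cite{w} provide a nonzero intertwining operator $\mathcal{Y}_n$ among them (the truncation in the $p$-direction in Wang's rule disappears for large $n$, and the truncation in the $q$-direction is never triggered because $s_k\leq q\ll nq-1$). A direct calculation shows that $c_{n,nq-1}\to c_{1,q}$ and $h_{n,nq-1;i_k,s_k}\to h_{i_k,s_k}$ as $n\to\infty$, so if one can extract a limit of $\mathcal{Y}_n$ that is a nonzero intertwining operator for $L(c_{1,q},0)$, the theorem follows.

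The main obstacle will be making this limit rigorous. My strategy is to realize $\mathcal{Y}_n$ at the Verma-module level by identifying all $M(c_{n,nq-1},h_{n,nq-1;i_k,s_k})$ with a common vector space through the PBW basis $\{e_{\mathbb{I}}v_{c,h}\}$, so that both the Virasoro action and the matrix coefficients of $\mathcal{Y}_n$ become explicit rational functions of $n$. After fixing a normalization (say, one designated matrix coefficient equal to $1$), the Jacobi identity and the $L_{-1}$-derivative formula determine all remaining coefficients recursively. Passing to the limit gives a candidate intertwining operator $\mathcal{Y}_\infty$ on the Verma modules $M(c_{1,q},h_{i_k,s_k})$, and to verify that $\mathcal{Y}_\infty$ descends to the irreducible quotients $L(c_{1,q},h_{i_k,s_k})$ I would use the continuity of the polynomial coefficients $f_{\mathbb{I}}(c,h)$ from Proposition \ref{rw}: the singular-vector relations satisfied by each $\mathcal{Y}_n$ converge to exactly those defining $L(c_{1,q},h_{i_k,s_k})$. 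Non-vanishing of $\mathcal{Y}_\infty$ is enforced by the normalization.
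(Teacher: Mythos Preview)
Your proposal follows the same two-part strategy as the paper: Li's injectivity refinement of Frenkel--Zhu for the upper bound, and the limit of intertwining operators over $L(c_{k,kq-1},0)$ for existence, with all Verma modules identified through the PBW basis so that structure constants vary continuously in $k$.

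Two small points to tighten. For the upper bound, the single singular vector in $M(c_{1,q},h_{i_1,s_1})$ only yields
\[
\prod_{k=0}^{i_1-1}\prod_{l=0}^{s_1-1}\bigl(h_{i_3,s_3}-h_{i_1+i_2-2k-1,\,s_1+s_2-2l-1}\bigr)=0,
\]
which when $i_1>i_2$ or $s_1>s_2$ is strictly weaker than $i\in A_{i_1,i_2}$, $s\in A_{s_1,s_2}$. The paper closes this by invoking the symmetry $\mathcal{N}^{W_3}_{W_1,W_2}=\mathcal{N}^{W_3}_{W_2,W_1}$ to obtain the companion equation with $(i_1,s_1)$ and $(i_2,s_2)$ swapped; the intersection of the two constraints gives the stated condition.

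For the limit, the paper does not claim the normalized matrix coefficients are rational in $k$; it instead extracts a convergent subsequence after normalization and then propagates convergence inductively using $[\omega][v]=[(L_{-2}+2L_{-1}+L_0)v]$ and the commutators $[L_{-n},L_{-1}]$. The nontrivial input for your normalization step is that some fixed matrix coefficient is nonzero for all large $k$: the paper secures this via a separate proposition showing that $A(L(c_k,h_1^k))$ is generated as a left $A(L(c_k,0))$-module by $[L_{-1}^j v_{c_k,h_1^k}]$, $0\le j<i_1s_1$, so that if all of these paired to zero the map $\pi(\mathcal{Y}_k)$ would vanish. Your rationality claim would streamline the subsequence extraction, but you still need this finite-generation fact to anchor the normalization.
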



\section{Frenkel-Zhu's formula}
Recall that to a vertex operator algebra $V$, we can associate the Zhu's algebra $A(V)$, and for each
 lowest weight $V$-module $M$, the lowest weight space $M(0)$ has a natural structure of $A(V)$-module.
 More generally, for each $V$-module $M$,
 define $O(M)\subset M$ to be the linear span of elements of type
 $$Res_{z}(Y(a,z)\frac{(1+z)^{deg\ a}}{z^{2}}m)$$
 where $a\in V$ and $m\in M$, and let $A(M)$ be the quotient space $M/O(M)$, then $A(M)$ has a natural structure of an $A(V)$-bimodule.
We recall the following useful result (cf.\cite{fz}).
\begin{proposition}\label{hehe}
For each submodule $M_{1}$ of $M$, $A(M_{1})$ is a submodule of
 the $A(V)$-bimodule $A(M)$, and the quotient $A(M)/A(M_{1})$ is isomorphic to the bimodule $A(M/M_{1})$.
 \end{proposition}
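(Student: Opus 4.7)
The plan is to treat the assignment $M\mapsto A(M)$ as a (right-exact) functor on $V$-modules and to reduce the statement to the assertion that the short exact sequence $0\to M_{1}\to M\to M/M_{1}\to 0$ induces an exact sequence
\[
A(M_{1})\xrightarrow{\bar\iota}A(M)\xrightarrow{\bar\pi}A(M/M_{1})\to 0
\]
of $A(V)$-bimodules with $\bar\iota$ injective. The claimed isomorphism $A(M)/A(M_{1})\cong A(M/M_{1})$ is then just the first isomorphism theorem applied to $\bar\pi$, and functoriality of the construction ensures the isomorphism respects the bimodule structure.

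First I would check that the inclusion $\iota:M_{1}\hookrightarrow M$ and the projection $\pi:M\twoheadrightarrow M/M_{1}$, being $V$-module maps, carry generators of $O(\cdot)$ to generators of $O(\cdot)$ on the other side: any generator $\mathrm{Res}_{z}Y(a,z)(1+z)^{\deg a}z^{-2}\bar m$ of $O(M/M_{1})$ is $\pi$ of $\mathrm{Res}_{z}Y(a,z)(1+z)^{\deg a}z^{-2}m$ for any lift $m$ of $\bar m$. Hence $\pi(O(M))=O(M/M_{1})$, so that $\pi^{-1}(O(M/M_{1}))=M_{1}+O(M)$, and we obtain well-defined $A(V)$-bimodule maps $\bar\iota,\bar\pi$ with $\bar\pi$ surjective and
\[
\ker\bar\pi=(M_{1}+O(M))/O(M)=\mathrm{Im}\,\bar\iota.
\]

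It remains to prove that $\bar\iota$ is injective, equivalently that $M_{1}\cap O(M)=O(M_{1})$. The inclusion $O(M_{1})\subset M_{1}\cap O(M)$ is immediate from the definition, and the reverse inclusion is the main obstacle. My plan here is to take $m\in M_{1}$ with a presentation $m=\sum_{i}\mathrm{Res}_{z}Y(a_{i},z)(1+z)^{\deg a_{i}}z^{-2}n_{i}$ (with $a_{i}\in V$ homogeneous and $n_{i}\in M$), expand each summand via $\mathrm{Res}_{z}Y(a,z)(1+z)^{\deg a}z^{-2}n=\sum_{j\geq 0}\binom{\deg a}{j}a_{j-2}n$, and use the $L_{0}$-eigenspace decomposition of $M$ to split the identity $\sum_{i,j}\binom{\deg a_{i}}{j}a_{i,j-2}n_{i}=m$ into finitely many weight components. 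On each weight piece I would systematically modify the choices of $n_{i}$, exploiting the fact that $M_{1}$ is a $V$-submodule (hence stable under every mode $a_{i,j-2}$) to absorb the contributions coming from any $n_{i}\notin M_{1}$ into $O(M_{1})$; the delicacy is that the generator is not $L_{0}$-homogeneous, so the bookkeeping must track simultaneously several weights produced by the binomial expansion.

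Once $M_{1}\cap O(M)=O(M_{1})$ is established, the conclusion is formal: $\bar\iota$ is injective, $A(M_{1})$ identifies with the $A(V)$-sub-bimodule $(M_{1}+O(M))/O(M)$ of $A(M)$, and $\bar\pi$ descends to the required $A(V)$-bimodule isomorphism $A(M)/A(M_{1})\xrightarrow{\sim}A(M/M_{1})$.
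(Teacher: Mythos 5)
The paper itself offers no proof of Proposition \ref{hehe}: it is recalled verbatim from \cite{fz}, so there is no in-paper argument to measure yours against. The formal half of your proposal is correct and is the standard one: since $\pi:M\to M/M_{1}$ is a $V$-module map, $\pi(a\circ m)=a\circ\pi(m)$ for the generator $a\circ m=\mathrm{Res}_{z}Y(a,z)(1+z)^{\deg a}z^{-2}m$, hence $\pi(O(M))=O(M/M_{1})$, the induced map $\bar\pi:A(M)\to A(M/M_{1})$ is a well-defined surjection of $A(V)$-bimodules, and $\ker\bar\pi=(M_{1}+O(M))/O(M)$ is the image of $M_{1}$ in $A(M)$.

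The genuine gap is exactly where you locate ``the main obstacle'': you never prove $M_{1}\cap O(M)\subset O(M_{1})$, i.e. the injectivity of $\bar\iota$. The paragraph about ``systematically modifying the choices of $n_{i}$'' and ``absorbing'' the contributions of $n_{i}\notin M_{1}$ into $O(M_{1})$ is a statement of intent, not an argument: given a relation $\sum_{i}a_{i}\circ n_{i}\in M_{1}$, the stability of $M_{1}$ under the modes $a_{i,j-2}$ gives no handle on the individual $n_{i}$, and because $a\circ n$ is spread over the weights $\mathrm{wt}\,n+\deg a+1-j$, $j\geq 0$, the weight components of distinct summands interlock rather than decouple, so the proposed bookkeeping has no visible termination. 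This injectivity is a recognized delicate point; the proposition is standardly read --- and is the only way it is used in this paper, namely in the proof of Proposition \ref{finite}, where one needs only that $[S_{i_{n}s_{n}}v_{c_{k},h_{n}^{k}}]$ lies in $\ker\bar\pi$ --- with $A(M_{1})$ interpreted as the image $(M_{1}+O(M))/O(M)$, for which your first paragraph already yields $A(M)/A(M_{1})\cong A(M/M_{1})$ completely. If you insist on the literal reading $A(M_{1})=M_{1}/O(M_{1})$, you must either supply an actual proof of $M_{1}\cap O(M)\subset O(M_{1})$ or restrict to settings where extra structure (such as the explicit descriptions in Propositions \ref{dd} and \ref{ee} for Virasoro Verma modules) allows it to be verified; as written, the central step of your proof is missing.
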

   In the case of Virasoro vertex operator algebras and Verma modules, we have the following results (cf.\cite{fz,ik}).

\begin{proposition} \label{dd}
Let $\mathcal {L}$ to be the subalgebra of $Vir^{-}$ spanned by $$L_{-n-2}+2L_{-n-1}+L_{-n},$$
for $n\geq1$. Then $O(V_{c})=\mathcal {L}.V_{c}$ and $A(V_{c})\cong H_{0}(\mathcal {L}, V_{c})$.
In the case of $M(c,h)$ (resp. the irreducible quotient $L(c,h)$), we also have
$$O(M(c,h))=\mathcal {L}.M(c,h)$$
$$(resp. O(L(c,h))=\mathcal {L}.L(c,h))$$
 and
  $$A(M(c,h))\cong H_{0}(\mathcal {L}, M(c,h)).$$
 $$(resp. A(L(c,h))\cong H_{0}(\mathcal {L}, L(c,h))).$$
\end{proposition}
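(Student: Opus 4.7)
The plan is to prove the theorem in two halves: an upper bound $\mathcal{N}\le 1$ together with the vanishing statement when $(i_3,s_3)$ lies outside the prescribed combinatorial region, and a matching lower bound $\mathcal{N}\ge 1$ inside that region. The upper bound I would obtain by the easy direction of the Frenkel--Zhu formula, using Propositions \ref{hehe} and \ref{dd} to compute the relevant $A(V)$-bimodule explicitly. The lower bound I would derive by a limiting procedure, realising $L(c_{1,q},0)$ as the $n\to\infty$ limit of the rational vertex operator algebras $L(c_{n,nq-1},0)$, whose fusion rules are already known from \cite{w}.

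For the upper bound, the easy direction of the Frenkel--Zhu formula gives
\[
\mathcal{N}^{W_3}_{W_1,W_2}\;\le\;\dim\operatorname{Hom}_{A(V)}\!\bigl(A(W_1)\otimes_{A(V)} W_2(0),\,W_3(0)\bigr),
\]
with $V=L(c_{1,q},0)$ and $W_k=L(c_{1,q},h_{i_k,s_k})$. Since every lowest weight space is one-dimensional, the right-hand side is controlled by a single graded piece of the $A(V)$-bimodule $A(W_1)$. By Propositions \ref{hehe} and \ref{dd}, this bimodule is the quotient of $A(M(c_{1,q},h_{i_1,s_1}))\cong H_0(\mathcal{L},M(c_{1,q},h_{i_1,s_1}))$ by the image of the submodule of $M(c_{1,q},h_{i_1,s_1})$ generated by its singular vector at weight $h_{i_1,s_1}+i_1 s_1$. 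Working out this quotient concretely, modulo the Zhu relations $L_{-n-2}+2L_{-n-1}+L_{-n}\equiv 0$, reduces the upper bound to a polynomial identity whose solution set picks out precisely $h_{i_3,s_3}=h_{i,s}$ for $(i,s)\in A_{i_1,i_2}\times A_{s_1,s_2}$, and in each such case the dimension is at most one.

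For the lower bound, I would fix the triple of indices $(i_k,s_k)$ and let $n$ be large. Each $L(c_{n,nq-1},0)$ is rational and $(n,nq-1)=1$, so \cite{w} applies. A direct check shows that $c_{n,nq-1}\to c_{1,q}$, and that under the identification $r_k=i_k$ the weights $h_{n,nq-1;i_k,s_k}$ converge to $h_{i_k,s_k}$; moreover, for $n$ sufficiently large, Wang's combinatorial selection rule reduces to exactly the condition $i_3\in A_{i_1,i_2}$, $s_3\in A_{s_1,s_2}$ of the theorem, with no reflection truncation from the $p,q$ bounds. Hence, inside that region, one obtains a nonzero intertwining operator at each level $n$; representing it by the action of its modes on the lowest weight vector, I would organise the relevant matrix coefficients as rational functions of the continuous parameter (equivalently in $c$), renormalise so that the leading term survives, and pass to the limit $n\to\infty$ to extract a nonzero intertwining operator for $L(c_{1,q},0)$ of the required type.

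The main obstacle is the final limit step. Generically in the central charge $c$ there is a one-dimensional space of intertwining operators between the Verma modules $M(c,h_k)$, depending algebraically on the parameters; the subtlety is that descending to the irreducible quotients $L(c,h_k)$ may create specialisations where the limiting operator becomes divisible by a factor vanishing at $c=c_{1,q}$. This is where the upper bound from the previous step becomes indispensable: it already forces the limit space to have dimension at most one, so producing any nonzero operator at $c=c_{1,q}$ is enough to conclude equality. A careful choice of normalisation on the lowest weight component, designed so that the leading coefficient of the matrix entries does not vanish in the limit, should close the argument, and this is where the technical work will concentrate.
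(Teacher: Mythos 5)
Your proposal does not address the statement at hand. The statement to be proved is Proposition \ref{dd}: that the subspace $O(M)\subset M$ spanned by the elements $\mathrm{Res}_{z}\bigl(Y(a,z)\frac{(1+z)^{\deg a}}{z^{2}}m\bigr)$ coincides with $\mathcal{L}.M$, where $\mathcal{L}$ is the abelian subalgebra of $Vir^{-}$ spanned by $L_{-n-2}+2L_{-n-1}+L_{-n}$ for $n\geq 1$, so that the Zhu bimodule $A(M)=M/O(M)$ is identified with the Lie algebra coinvariants $H_{0}(\mathcal{L},M)$, for $M=V_{c}$, $M(c,h)$ and $L(c,h)$. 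What you have written instead is a strategy outline for Theorem \ref{main} (the fusion-rule computation): an upper bound via the easy half of the Frenkel--Zhu formula and a lower bound via the limit $L(c_{n,nq-1},0)\to L(c_{1,q},0)$. That outline in fact \emph{uses} Proposition \ref{dd} as an input (you invoke it explicitly to identify the bimodule $A(W_{1})$), so it cannot serve as a proof of it; the argument would be circular even if it were otherwise complete.

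A proof of the actual statement would have to proceed quite differently. Since $V_{c}$ is generated by the conformal vector $\omega$, one shows that $O(M)$ is spanned by the elements $\mathrm{Res}_{z}\bigl(Y(\omega,z)\frac{(1+z)^{2}}{z^{2+n}}m\bigr)$ for $n\geq 1$ and $m\in M$; expanding $Y(\omega,z)=\sum L_{k}z^{-k-2}$ shows each such element equals $(L_{-n-2}+2L_{-n-1}+L_{-n})m$, giving $O(M)=\mathcal{L}.M$ and hence $A(M)=M/\mathcal{L}.M=H_{0}(\mathcal{L},M)$ because $\mathcal{L}$ is a Lie subalgebra acting on $M$. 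The reduction from general $a\in V_{c}$ to $a=\omega$ is the only step requiring care (it uses the standard closure properties of $O(M)$ under the $\ast$-operations). The paper itself does not reprove this; it cites \cite{fz} and \cite{ik}. As it stands, your submission leaves the entire statement unproved.
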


\begin{proposition} \label{ee}
We have an isomorphism of associative algebra:
$$A(V_{c})\cong\mathbb{C}[x]; \ \ \ \  [\omega]^{n}\mapsto x^{n},\ \ n\in\mathbb{Z}_{\geq0}. \ $$
For Verma module $M(c,h)$, the $A(V_{c})$-bimodule $A(M(c,h))$ is isomorphic to $\mathbb{C}[x,y]$,
where the highest weight vector $v_{c,h}$ represents $\textbf{1}\in \mathbb{C}[x,y]$, and the left and the right actions of $A(V_{c})$ are given by
$$x\cdot f(x,y)=xf(x,y),$$
$$f(x,y)\cdot x=yf(x,y),$$ for any $f(x,y)\in \mathbb{C}[x,y]$.
\end{proposition}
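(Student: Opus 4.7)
The plan is to establish $A(V_c)\cong\mathbb{C}[x]$ first and then bootstrap it to the bimodule statement. First I would compute the left and right Zhu products with $\omega$ (which has $\deg\omega=2$): for any $V_c$-module $M$ and $v\in M$, unraveling the definitions gives
$$[\omega]\cdot[v]=[L_{-2}v+2L_{-1}v+L_0v],\qquad [v]\cdot[\omega]=[L_{-2}v+L_{-1}v],$$
so $[L_{-1}v+L_0v]=[\omega]\cdot[v]-[v]\cdot[\omega]$ in $A(M)$. Specialized to $M=V_c$, computing $u\circ\mathbf{1}=\mathrm{Res}_z\,Y(u,z)(1+z)^{\deg u}z^{-2}\mathbf{1}$ via $Y(u,z)\mathbf{1}=e^{zL_{-1}}u$ yields $L_{-1}u+L_0u\in O(V_c)$ for homogeneous $u$, hence $A(V_c)$ is commutative and $[L_{-1}u]=-\deg(u)[u]$. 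Combined with the relations of Proposition \ref{dd}, namely $[L_{-n-2}v+2L_{-n-1}v+L_{-n}v]=0$ for $n\ge 1$, I then induct on the Virasoro weight to show $A(V_c)$ is spanned by $\{[\omega]^k\}_{k\ge 0}$: for a PBW monomial $u=L_{-n_1}\cdots L_{-n_k}\mathbf{1}$ with $n_1\ge\cdots\ge n_k\ge 2$, setting $v'=L_{-n_2}\cdots L_{-n_k}\mathbf{1}$, the case $n_1\ge 3$ expresses $[L_{-n_1}v']$ as a combination of $[L_{-n_1+1}v']$ and $[L_{-n_1+2}v']$ of strictly lower weight, while $n_1=2$ gives $[L_{-2}v']=[\omega]\cdot[v']-(\deg v')[v']$, again of lower weight. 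Thus the map $\phi:\mathbb{C}[x]\to A(V_c)$, $x\mapsto[\omega]$, is a surjective algebra homomorphism. Injectivity follows because $\mathbb{C}v_{c,h}\subset M(c,h)$ is a $1$-dimensional $A(V_c)$-module on which $[\omega]$ acts as $o(\omega)=L_0=h$, so $p([\omega])=0$ forces $p(h)=0$ for every $h\in\mathbb{C}$, hence $p=0$.

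For the bimodule statement I define $\psi:\mathbb{C}[x,y]\to A(M(c,h))$ by $x^my^n\mapsto[\omega]^m\cdot[v_{c,h}]\cdot[\omega]^n$. Since $A(V_c)$ is commutative and $A(M(c,h))$ is an $A(V_c)$-bimodule, $\psi$ is well-defined and a bimodule map with $\psi(1)=[v_{c,h}]$. Surjectivity is proved by the analogous weight induction on PBW monomials $u=L_{-n_1}\cdots L_{-n_k}v_{c,h}$, now with $n_1\ge\cdots\ge n_k\ge 1$: the $n_1\ge 3$ case uses the module version of Proposition \ref{dd}, the $n_1=2$ case uses $[L_{-2}v']=[\omega]\cdot[v']-2[L_{-1}v']-[L_0v']$, and the new $n_1=1$ case uses the identity from the preliminary computation,
$$[L_{-1}v']=[\omega]\cdot[v']-[v']\cdot[\omega]-\deg(v')[v'],$$
which eliminates $L_{-1}$ using both the left and the right $[\omega]$-actions. (In the module setting $[\omega]\cdot[v']\ne[v']\cdot[\omega]$ in general, which is precisely why the two variables $x,y$ are needed.)

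The main obstacle is the injectivity of $\psi$. My strategy is to produce, for a Zariski-dense set of $(h_1,h_2)\in\mathbb{C}^2$, an $A(V_c)$-bimodule homomorphism $\mathrm{ev}_{h_1,h_2}:A(M(c,h))\to\mathbb{C}$ on which the left $[\omega]$ acts as the scalar $h_1$, the right $[\omega]$ as the scalar $h_2$, and $\mathrm{ev}_{h_1,h_2}([v_{c,h}])\ne 0$; such a map sends $\psi(p(x,y))$ to $p(h_1,h_2)\cdot\mathrm{ev}_{h_1,h_2}([v_{c,h}])$, so $\psi(p)=0$ forces $p(h_1,h_2)=0$ on a dense set, hence $p=0$. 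To build $\mathrm{ev}_{h_1,h_2}$ I would construct a nonzero intertwining operator $\mathcal{Y}$ of type $\binom{M(c,h_1)}{M(c,h)\;M(c,h_2)}$ by declaring $\mathcal{Y}(v_{c,h},z)v_{c,h_2}=z^{h_1-h-h_2}v_{c,h_1}+\cdots$ and extending to all of $M(c,h)$ through the Jacobi identity, using the freeness of the Verma modules over $U(Vir^-)$; the top-level coefficient of $\mathcal{Y}$ then defines $\mathrm{ev}_{h_1,h_2}$. Verifying that this construction produces a consistent intertwining operator for generic parameters, with the induced bimodule map nonzero on $[v_{c,h}]$, is the technical heart of the argument.
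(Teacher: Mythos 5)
The paper does not actually prove Proposition \ref{ee}; it is recalled from \cite{fz,ik}, so your attempt should be measured against the standard argument there. Your preliminary computations are right: the left/right $[\omega]$-actions, the relation $u\circ\mathbf{1}=L_{-1}u+(\deg u)u$ giving $[L_{-1}u]=-(\deg u)[u]$ in $A(V_c)$, the weight induction showing $\{[\omega]^k\}$ spans $A(V_c)$ and that the $[\omega]^m[v_{c,h}][\omega]^n$ span $A(M(c,h))$, and the injectivity of $\mathbb{C}[x]\to A(V_c)$ via evaluation on the top levels $M(c,h)(0)$ are all sound in structure (one sign slip: in $A(V_c)$ one gets $[L_{-2}v']=[\omega][v']+(\deg v')[v']$, not $-(\deg v')[v']$; this does not affect the induction).

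The genuine gap is the injectivity of $\psi:\mathbb{C}[x,y]\to A(M(c,h))$, which is the substantive half of the proposition. You reduce it to producing, for a Zariski-dense set of $(h_1,h_2)$, a nonzero intertwining operator of type $\binom{M(c,h_1)}{M(c,h)\;M(c,h_2)}$ by prescribing the top coefficient and ``extending through the Jacobi identity,'' and you yourself flag the consistency of that extension as unverified. This is not a routine verification: showing that a prescribed top-level map extends to an intertwining operator among Verma modules is essentially the surjectivity half of the Frenkel--Zhu theorem (Li's Theorem 2.11, invoked in Section 5 of the paper), whose proof already relies on knowing the bimodule $A(M(c,h))$ --- so your route is in danger of circularity. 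The standard proof avoids intertwining operators entirely: by Proposition \ref{dd}, $A(M(c,h))=M(c,h)/\mathcal{L}M(c,h)$ with $\mathcal{L}=\mathrm{span}\{L_{-n-2}+2L_{-n-1}+L_{-n}:n\geq1\}$; one checks that $\mathcal{L}$ is a Lie subalgebra of $Vir^-$ with $Vir^-=\mathcal{L}\oplus\mathbb{C}L_{-2}\oplus\mathbb{C}L_{-1}$, so the PBW theorem applied to an ordered basis beginning with a basis of $\mathcal{L}$ identifies $M(c,h)/\mathcal{L}M(c,h)$ with $\mathbb{C}[L_{-2},L_{-1}]v_{c,h}$ as a graded vector space, giving surjectivity and injectivity simultaneously; the triangular change of variables coming from Proposition \ref{ff} then converts $(L_{-2},L_{-1})$ into the stated actions $x$ and $y$. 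To salvage your evaluation-map strategy you would need an independent construction of $\mathrm{ev}_{h_1,h_2}$, which you have not supplied.
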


\begin{proposition} \label{ff}
The left and right actions of $A(V_{c})$ on $A(M(c,h))$ are given by
$$[\omega][v]=[(L_{-2}+2L_{-1}+L_{0})v],$$
$$[v][\omega]=[(L_{-2}+L_{-1})v],$$
for any $v\in M(c,h) $, where $\omega=L_{-2}\textbf{1}$.
\end{proposition}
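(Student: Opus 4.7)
The proof splits along the announced division into Sections 3 and 4: an upper bound $\mathcal{N}\leq 1$ via the easy half of Frenkel--Zhu, and a matching lower bound (plus identification of the equality locus with $A_{i_1,i_2}\times A_{s_1,s_2}$) by limiting from the rational Virasoro vertex operator algebras $L(c_{n,nq-1},0)$ as $n\to\infty$.

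\textbf{Upper bound via Frenkel--Zhu (Section 3).} Set $V=V_{c_{1,q}}$, $M_k=M(c_{1,q},h_{i_k,s_k})$ and $L_k=L(c_{1,q},h_{i_k,s_k})$. By Proposition \ref{ee} the $A(V)$-bimodule $A(M_1)$ is $\mathbb{C}[x,y]$, with its two $[\omega]$-actions spelled out by Proposition \ref{ff}. The maximal proper submodule of $M_1$ is generated by a single singular vector at weight $h_{i_1,s_1}+i_1 s_1$; tracking this vector and its $L_{-1}$-descendants through the quotient $M_1\twoheadrightarrow A(M_1)$ yields one polynomial relation $P_{i_1,s_1}(x,y)\in\mathbb{C}[x,y]$, so Proposition \ref{hehe} gives
\[
A(L_1)\cong \mathbb{C}[x,y]/\bigl(P_{i_1,s_1}(x,y)\bigr).
\]
Frenkel--Zhu's formula bounds $\mathcal{N}^{L_3}_{L_1,L_2}$ above by $\dim_{\mathbb{C}}\mathrm{Hom}_{A(V)}\!\bigl(A(L_1)\otimes_{A(V)}L_2(0),\,L_3(0)\bigr)$. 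Since $L_2(0)$ and $L_3(0)$ are one-dimensional with $[\omega]$ acting as $h_{i_2,s_2}$ and $h_{i_3,s_3}$, this reduces to
\[
\dim_{\mathbb{C}}\mathbb{C}[x,y]/(P_{i_1,s_1},\,x-h_{i_3,s_3},\,y-h_{i_2,s_2})\in\{0,1\},
\]
with value $1$ if and only if $P_{i_1,s_1}(h_{i_3,s_3},h_{i_2,s_2})=0$. This settles $\mathcal{N}\leq 1$ and reduces equality to a polynomial-vanishing condition at the triple.

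\textbf{Lower bound and combinatorics via the limit (Section 4).} For every integer $n\geq 2$ the vertex operator algebra $L(c_{n,nq-1},0)$ is rational, and by \cite{w} its fusion rules between $L(c_{n,nq-1},h_{n,nq-1;i_k,s_k})$ are exactly those claimed in the theorem, truncated to the rectangle $\{1,\ldots,n-1\}\times\{1,\ldots,nq-2\}$. Using $c_{n,nq-1}\to c_{1,q}$ and $h_{n,nq-1;i,s}\to h_{i,s}$ as $n\to\infty$ (both rational in the parameter $t=n/(nq-1)$), pick for each $(i,s)\in A_{i_1,i_2}\times A_{s_1,s_2}$ and each sufficiently large $n$ a non-zero intertwining operator $\mathcal{Y}_n$ for the corresponding $L(c_{n,nq-1},0)$-modules, normalised so that one distinguished top-level matrix coefficient equals $1$. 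The Jacobi identity, $L_{-1}$-derivative formula and truncation property imply that every other matrix coefficient of $\mathcal{Y}_n$ is rational in $t$ and regular at $t=1/q$; letting $n\to\infty$ produces a non-zero intertwining operator $\mathcal{Y}_\infty$ for the three $L(c_{1,q},\cdot)$-modules, giving $\mathcal{N}\geq 1$ on $A_{i_1,i_2}\times A_{s_1,s_2}$. Combined with the upper bound, this pins the vanishing locus of $P_{i_1,s_1}(\cdot,h_{i_2,s_2})$ among admissible weights exactly at $A_{i_1,i_2}\times A_{s_1,s_2}$, closing the equality statement.

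\textbf{Main obstacle.} The crux is the limit step. One must prove that the matrix coefficients of $\mathcal{Y}_n$ are genuinely rational (or at worst holomorphic) in $t$ near $t=1/q$, that the singular-vector relations cutting out $L(c_{n,nq-1},h_{n,nq-1;i_k,s_k})$ inside the Verma modules deform continuously into those defining $L(c_{1,q},h_{i_k,s_k})$ so that $\mathcal{Y}_\infty$ really descends to the irreducible modules, and that the chosen normalising matrix element does not accidentally vanish in the limit. Establishing this uniform analytic and algebraic control on the family $\{\mathcal{Y}_n\}$ is the technical heart of the argument.
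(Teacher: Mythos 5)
Your proposal does not prove the statement in question. Proposition \ref{ff} is the short, purely formal assertion that the left and right actions of $[\omega]$ on the bimodule $A(M(c,h))$ are $[\omega][v]=[(L_{-2}+2L_{-1}+L_{0})v]$ and $[v][\omega]=[(L_{-2}+L_{-1})v]$. What you have written instead is an outline of the proof of the main theorem of the paper (the fusion rules of $L(c_{1,q},0)$, Theorem \ref{main}), for which Proposition \ref{ff} is merely one of the recalled ingredients. Nothing in your text establishes, or even addresses, the two displayed formulas.

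The actual content of Proposition \ref{ff} is a one-line residue computation from the definition of the $A(V)$-bimodule structure on $A(M)$ in Frenkel--Zhu: the left action is induced by $a*m=\mathrm{Res}_{z}\,Y(a,z)\frac{(1+z)^{\deg a}}{z}m$ and the right action by $m*a=\mathrm{Res}_{z}\,Y(a,z)\frac{(1+z)^{\deg a-1}}{z}m$. Taking $a=\omega$ with $\deg\omega=2$ and $Y(\omega,z)=\sum_{n}L_{n}z^{-n-2}$ gives
\[
\omega*v=\mathrm{Res}_{z}\sum_{n}L_{n}z^{-n-3}(1+2z+z^{2})v=(L_{-2}+2L_{-1}+L_{0})v,
\]
\[
v*\omega=\mathrm{Res}_{z}\sum_{n}L_{n}z^{-n-3}(1+z)v=(L_{-2}+L_{-1})v,
\]
which is exactly the claim; the paper itself simply cites this from \cite{fz,ik} without proof. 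You should redirect your argument to this computation (or to the appropriate citation) rather than to the global fusion-rule statement.
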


From now on, $W_{i}=\oplus_{n\in\mathbb{N}}W_{i}(n)$ $(i=1,2,3)$ will always be irreducible $V$-modules,
 where $W_{i}(n)$ is the $L_{0}$-eigenspace of $W_{i}$ with eigenvalue $n+h_{i}$.
\begin{proposition}\label{in}
(cf.\cite{fz})Let $\mathcal {Y}(\cdot,x)$ be an intertwining operator of type
\begin{math}
(\begin{array}{cccc}
\;\; W_{3}  \\
W_{1}\; W_{2}\\
\end{array})
\end{math}.
Then $\mathcal {Y}(\cdot,x)$ has the following form:
$$\mathcal {Y}(w,x)=\sum_{n\in\mathbb{Z}}w(n)x^{-n-1}x^{-h_{1}-h_{2}+h_{3}},$$
such that for any $w\in W_{1}(k)$
$$w(n)W_{2}(m)\subset W_{3}(m+k-n-1)$$
\end{proposition}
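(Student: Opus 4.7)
The plan is to pin down the structure of $\mathcal{Y}$ by extracting its commutator with $L_0$ from the Jacobi identity, and then invoking $L_0$-eigenvalue arithmetic to isolate the allowed exponents in $x$.

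First, I would substitute $v = \omega$ into the Jacobi identity and apply $\mathrm{Res}_{x_0}$ to obtain the standard commutator
$$[Y(\omega, x_1), \mathcal{Y}(w, x_2)] = \mathrm{Res}_{x_0}\, x_2^{-1}\delta\!\left(\tfrac{x_1-x_0}{x_2}\right)\mathcal{Y}(Y(\omega, x_0) w, x_2).$$
Applying $\mathrm{Res}_{x_1} x_1$ then isolates the $L_0$ term on the left, since $Y(\omega, x_1) = \sum L_n\, x_1^{-n-2}$. On the right, the delta-function symmetry $x_2^{-1}\delta(\tfrac{x_1-x_0}{x_2}) = x_1^{-1}\delta(\tfrac{x_2+x_0}{x_1})$ together with the residue/substitution rule gives the factor $x_2 + x_0$; expanding $Y(\omega, x_0) w$, taking $\mathrm{Res}_{x_0}$ (only the $L_0 w$ and $L_{-1} w$ modes survive), and invoking the $L_{-1}$-derivative property, one arrives at
$$[L_0, \mathcal{Y}(w, x)] = \mathcal{Y}(L_0 w, x) + x\,\tfrac{d}{dx}\mathcal{Y}(w, x).$$

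Next, I would expand $\mathcal{Y}(w, x) = \sum_{\alpha \in \mathbb{C}} w_\alpha\, x^{-\alpha-1}$ as a formal series with complex exponents (allowed by $\mathcal{Y}(\cdot, x) \in \mathrm{Hom}(W_2, W_3)\{x\}$). For $w \in W_1(k)$, so $L_0 w = (h_1+k) w$, the commutator identity yields $[L_0, w_\alpha] = (h_1 + k - \alpha - 1)\, w_\alpha$. Applied to $w_2 \in W_2(m)$, this produces
$$L_0\, (w_\alpha w_2) = (h_1 + h_2 + k + m - \alpha - 1)\, w_\alpha w_2.$$
Because $W_3$ is $L_0$-semisimple with eigenvalues lying in $h_3 + \mathbb{N}$, the vector $w_\alpha w_2$ must vanish unless $\alpha - (h_1 + h_2 - h_3) \in \mathbb{Z}$, i.e., $\alpha = n + h_1 + h_2 - h_3$ for some $n \in \mathbb{Z}$.

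Finally, setting $w(n) := w_{n + h_1 + h_2 - h_3}$ and substituting back produces the asserted expansion
$$\mathcal{Y}(w, x) = \sum_{n \in \mathbb{Z}} w(n)\, x^{-n-1}\, x^{-h_1 - h_2 + h_3},$$
while the eigenvalue formula above specializes to $L_0\, (w(n) w_2) = (h_3 + k + m - n - 1)\, w(n) w_2$, establishing the grading $w(n) W_2(m) \subset W_3(k+m-n-1)$. The only delicate point is the bookkeeping of delta-function residues in the first step; the remaining content is elementary arithmetic in $L_0$-eigenvalues.
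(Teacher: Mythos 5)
Your argument is correct, and it is the standard proof of this fact. Note, however, that the paper itself offers no proof of Proposition \ref{in}: it is quoted from Frenkel--Zhu with the citation \cite{fz}, so there is nothing in the text to compare against. Your derivation of $[L_{0},\mathcal{Y}(w,x)]=\mathcal{Y}(L_{0}w,x)+x\frac{d}{dx}\mathcal{Y}(w,x)$ from the Jacobi identity (via $\mathrm{Res}_{x_{0}}$ and $\mathrm{Res}_{x_{1}}x_{1}$, with only the $L_{0}w$ and $L_{-1}w$ modes surviving) checks out, and the $L_{0}$-eigenvalue bookkeeping correctly forces the exponents into $h_{1}+h_{2}-h_{3}+\mathbb{Z}$ and yields the grading $w(n)W_{2}(m)\subset W_{3}(m+k-n-1)$; the only cosmetic mismatch is that the paper's definition of an intertwining operator sums over $n\in\mathbb{Q}$ rather than over all complex exponents, which does not affect your argument.
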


We need also the symmetry property of fusion rules,
 i.e., $\mathcal {N}^{W_{3}}_{W_{1},W_{2}}=\mathcal {N}^{W_{3}}_{W_{2},W_{1}}$ (cf.\cite{fhl}).

Let $\mathcal {Y}(\cdot,x)$ be an intertwining operator of type
\begin{math}
(\begin{array}{cccc}
\;\; W_{3}  \\
W_{1}\; W_{2}\\
\end{array})
\end{math}.
 By Proposition \ref{in}, we can define a linear map $o_{\mathcal {Y}}$ from $W_{1}\otimes W_{2}(0)$ to $W_{3}(0)$ by sending
  $w_{1}\otimes w_{2}$ ($w_{1}\in W_{1}(n)$, $w_{2}\in W_{2}(0)$) to $w_{1}(n-1)w_{2}$. It can be proved that
   $w_{1}(n-1)w_{2}=0$ for $w_{1}\in O(W_{1})$, and $o_{\mathcal {Y}}$ induces an $A(V)$-homomorphism $$\pi(\mathcal {Y}):A(W_{1})\otimes_{A(V)}W_{2}(0)\rightarrow W_{3}(0).$$
 Thus we get a linear map:
 $$\pi:
I(\begin{array}{cccc}
\;\; W_{3}  \\
W_{1}\; W_{2}\\
\end{array})
\rightarrow Hom_{A(V)}(A(W_{1})\otimes_{A(V)}W_{2}(0), W_{3}(0))$$
The Frenkel-Zhu's formula  (cf.\cite{fz}) states that $\pi$ is an isomorphism if $W_{i}$ $(i=1,2,3)$ are irreducible modules.
It was pointed out in \cite{li} that this formula only holds for rational vertex operator algebras, and for more general vertex
 operator algebras, we have the following proposition (cf.\cite{li}).
\begin{proposition}\label{ffff}
If $W_{3}$ is irreducible, then $$\pi:
I(\begin{array}{cccc}
\;\; W_{3}  \\
W_{1}\; W_{2}\\
\end{array})
\rightarrow Hom_{A(V)}(A(W_{1})\otimes_{A(V)}W_{2}(0), W_{3}(0))$$ is injective.
\end{proposition}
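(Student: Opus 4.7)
The plan is to prove the contrapositive: I will show that if $\pi(\mathcal{Y}) = 0$, then $\mathcal{Y} = 0$. The key observation is that the image $\mathrm{Im}(\mathcal{Y}) := \mathrm{span}\{w_1(n) w_2 : w_1 \in W_1, w_2 \in W_2, n \in \mathbb{Q}\}$ is a $V$-submodule of $W_3$; this follows from the commutator formula
\[
v(m) w_1(n) w_2 = w_1(n) v(m) w_2 + \sum_{i \geq 0} \binom{m}{i} (v(i) w_1)(n + m - i) w_2,
\]
extracted from the Jacobi identity, which keeps us inside $\mathrm{Im}(\mathcal{Y})$. Since $W_3$ is irreducible, $\mathrm{Im}(\mathcal{Y})$ is either $0$ or all of $W_3$; in the latter case it contains $W_3(0)$, which is nonzero because $W_3$ is a lowest-weight module. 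So it suffices to prove, under the hypothesis $\pi(\mathcal{Y}) = 0$, that $W_3(0) \cap \mathrm{Im}(\mathcal{Y}) = 0$.

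To establish this, I introduce the extended zero-mode map $\tilde o_\mathcal{Y}: W_1 \otimes W_2 \to W_3(0)$ sending $w_1 \otimes w_2$, for $w_1 \in W_1(k)$ and $w_2 \in W_2(m)$, to $w_1(k + m - 1) w_2$; by Proposition \ref{in} this lands in $W_3(0)$, and every element of $W_3(0) \cap \mathrm{Im}(\mathcal{Y})$ is a sum of such values. I claim $\tilde o_\mathcal{Y}$ vanishes identically, which I prove by induction on $m$. The base case $m = 0$ is precisely the hypothesis $\pi(\mathcal{Y}) = 0$. For $m > 0$, since $W_2$ is irreducible and hence generated by $W_2(0)$ under the $V$-action, any $w_2 \in W_2(m)$ is a sum of elements $v(j) w_2'$ with $v \in V$ homogeneous, $w_2' \in W_2(m')$, $m' < m$, and $j < \mathrm{wt}(v) - 1$ (strictly raising modes). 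Applying the commutator formula with $N = k + m - 1$,
\[
w_1(N) v(j) w_2' = v(j) w_1(N) w_2' + \sum_{i \geq 0} \binom{j}{i} (v(i) w_1)(N + j - i) w_2',
\]
the first term on the right vanishes because $w_1(N) w_2' \in W_3(m' - m) = 0$ by non-negative grading, while each remaining summand equals $\tilde o_\mathcal{Y}\bigl(v(i) w_1 \otimes w_2'\bigr)$ with $w_2' \in W_2(m')$ of strictly smaller degree, hence vanishes by the inductive hypothesis.

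Combining these pieces: if $\pi(\mathcal{Y}) = 0$ then $\tilde o_\mathcal{Y} \equiv 0$, so $W_3(0) \cap \mathrm{Im}(\mathcal{Y}) = 0$, forcing $\mathrm{Im}(\mathcal{Y}) = 0$ and hence $\mathcal{Y} = 0$. The hardest step is the inductive one: I must verify both that $W_2$ is genuinely generated from $W_2(0)$ using strictly raising modes (so the induction terminates) and that all the degree shifts in the commutator formula conspire so that $v(j) w_1(N) w_2'$ lands in a negative-degree component of $W_3$ while each $(v(i) w_1)(N + j - i) w_2'$ lands precisely in $W_3(0)$. Both are routine bookkeeping using the grading formula $w(n) W_2(\ell) \subseteq W_3(\ell + \mathrm{wt}(w) - n - 1)$ from Proposition \ref{in}, but together they are the mechanism that makes the induction go through.
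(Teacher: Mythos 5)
The paper itself gives no proof of this proposition---it is imported directly from \cite{li}---so there is nothing internal to compare against; what you have written is, in substance, the standard argument from that reference, and it is correct. Your two key observations (that $\mathrm{span}\{w_{1}(n)w_{2}\}$ is a graded submodule of the irreducible module $W_{3}$, hence vanishes as soon as its degree-zero part does, and that this degree-zero part is killed by induction on the degree of $w_{2}$ using the commutator formula together with the grading of Proposition \ref{in}) are exactly the right mechanism, and your degree bookkeeping checks out: $v(j)w_{1}(N)w_{2}'$ lands in $W_{3}(m'-m)=0$ while each $(v(i)w_{1})(N+j-i)w_{2}'$ is literally $\tilde{o}_{\mathcal{Y}}(v(i)w_{1}\otimes w_{2}')$. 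Two small repairs. First, the commutator formula should carry a minus sign, $w_{1}(N)v(j)=v(j)w_{1}(N)-\sum_{i\geq 0}\binom{j}{i}(v(i)w_{1})(N+j-i)$; this is immaterial since every term is shown to vanish. Second, the assertion that each element of $W_{2}(m)$, $m>0$, is a sum of single modes $v(j)w_{2}'$ applied to vectors of strictly smaller degree is precisely the one-generator spanning lemma (a consequence of weak associativity, not of the commutator formula alone), and it is the place where a hypothesis on $W_{2}$---irreducibility, or at least generation by $W_{2}(0)$, which the paper's standing convention on the $W_{i}$ supplies even though the bare statement of the proposition mentions only $W_{3}$---actually enters; it deserves an explicit citation rather than being treated as routine.
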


Now we follow the treatment in \S 9.3 of \cite{ik}. First, we consider the three $L(c_{1,q},0)$-modules $L(c_{1,q},h_{i_{n},s_{n}})$,
where $i_{n}>0$, $0<s_{n}\leq q$ (n=1,2,3). We want to compute the dimension of
$$Hom_{A(L(c_{1,q},0))}(A(L(c_{1,q},h_{i_{1},s_{1}}))\otimes_{A(L(c_{1,q},0))}L(c_{1,q},h_{i_{2},s_{2}})(0), L(c_{1,q},h_{i_{3},s_{3}})(0)).$$
By Proposition \ref{dd}, its dual space is isomorphic to the simultaneous eigenspace of the left and right actions
of $[\omega]$ on $H_{0}(\mathcal {L}, L(c_{1,q},h_{i_{1},s_{1}}))^{\ast}=H^{0}(\mathcal {L}, L(c_{1,q},h_{i_{1},s_{1}})^{\ast})$
with the eigenvalues $-h_{i_{3},s_{3}}$ and $-h_{i_{2},s_{2}}$ respectively;
denote this eigenspace by $H^{0}(\mathcal {L}, L(c_{1,q},h_{i_{1},s_{1}})^{\ast})^{(-h_{i_{3},s_{3}}, -h_{i_{2},s_{2}})}$. Then the surjection
$$M(c_{1,q},h_{i_{1},s_{1}})\twoheadrightarrow L(c_{1,q},h_{i_{1},s_{1}})$$
induces an injection
$$i:H^{0}(\mathcal {L}, L(c_{1,q},h_{i_{1},s_{1}})^{\ast})^{(-h_{i_{3},s_{3}},-h_{i_{2},s_{2}})}\hookrightarrow H^{0}(\mathcal {L}, M(c_{1,q},h_{i_{1},s_{1}})^{\ast})^{(-h_{i_{3},s_{3}},-h_{i_{2},s_{2}})}.$$
The argument in \S 9.3 of \cite{ik} shows that
$H^{0}(\mathcal {L}, M(c_{1,q},h_{i_{1},s_{1}})^{\ast})^{(-h_{i_{3},s_{3}},-h_{i_{2},s_{2}})}$ is one-dimensional, and $i$ is an isomorphism
 if and only if
\begin{equation}\label{eq}
P_{i_{1},s_{1}}(-h_{i_{2},s_{2}}, -h_{i_{3},s_{3}}+h_{i_{1},s_{1}},q)=0,
\end{equation}
 where $P_{\alpha,\beta}(a,b;\xi)\in\mathbb{C}[a,b,\xi,\xi^{-1}]$ satisfies

  $$P_{\alpha,\beta}(a,b;\xi)^{2}=\prod_{k=0}^{\alpha-1}\prod_{l=0}^{\beta-1}Q_{k,l}^{\alpha,\beta}(a,b;\xi),$$
\begin{eqnarray*}
&&Q_{k,l}^{\alpha,\beta}(a,b;\xi)\\
&&=[(b-a)-(k\xi^{\frac{1}{2}}-l\xi^{-\frac{1}{2}})\{  (\alpha-k)\xi^{\frac{1}{2}}-(\beta-l)\xi^{-\frac{1}{2}} \}] \\
&&\times[(b-a)-\{(k+1)\xi^{\frac{1}{2}}-(l+1)\xi^{-\frac{1}{2}}\}\{  (\alpha-k-1)\xi^{\frac{1}{2}}-(\beta-l-1)\xi^{-\frac{1}{2}} \}] \\
&&+\{ (\alpha-2k-1)\xi^{\frac{1}{2}}-(\beta-l-1)\xi^{-\frac{1}{2}} \}^{2}a.
\end{eqnarray*}

Direct computation shows that Equation (\ref{eq}) is equivalent to the equation
$$\prod_{k=0}^{i_{1}-1}\prod_{l=0}^{s_{1}-1}(h_{i_{3},s_{3}}-h_{i_{1}+i_{2}-2k-1,s_{1}+s_{2}-2l-1})=0$$
Now combining the symmetry property of fusion rules and Proposition \ref{ffff} yields
 $\mathcal {N}^{L(c_{1,q},h_{i_{3},s_{3}})}_{L(c_{1,q},h_{i_{1},s_{1}}),L(c_{1,q},h_{i_{2},s_{2}})}\leq1$,
 and $\mathcal {N}^{L(c_{1,q},h_{i_{3},s_{3}})}_{L(c_{1,q},h_{i_{1},s_{1}}),L(c_{1,q},h_{i_{2},s_{2}})}=1$
 only if the following two equations hold:
$$\prod_{k=0}^{i_{1}-1}\prod_{l=0}^{s_{1}-1}
(h_{i_{3},s_{3}}-h_{i_{1}+i_{2}-2k-1,s_{1}+s_{2}-2l-1})=0;$$
$$\prod_{k=0}^{i_{2}-1}\prod_{l=0}^{s_{2}-1}
(h_{i_{3},s_{3}}-h_{i_{1}+i_{2}-2k-1,s_{1}+s_{2}-2l-1})=0.$$
For each $m,n>0$, set $A_{m,n} =\{m+n-1,m+n-3,\cdots,|m-n|+1\}$.
Then one checks that these two equations are equivalent to the existences of
$i\in A_{i_{1},i_{2}}$, $s\in A_{s_{1},s_{2}}$ such that $h_{i_{3},s_{3}}=h_{i,s}$.

To sum up, we have proved that $\mathcal {N}^{L(c_{1,q},h_{i_{3},s_{3}})}_{L(c_{1,q},h_{i_{1},s_{1}}),L(c_{1,q},h_{i_{2},s_{2}})}\leq1$
 if there exists $i\in A_{i_{1},i_{2}}$, $s\in A_{s_{1},s_{2}}$ such that $h_{i_{3},s_{3}}=h_{i,s}$, and $\mathcal {N}^{L(c_{1,q},h_{i_{3},s_{3}})}_{L(c_{1,q},h_{i_{1},s_{1}}),L(c_{1,q},h_{i_{2},s_{2}})}=0$ otherwise.

\section{Construction of intertwining operators}

In this section we always adopt the following convention:
\begin{convention}\label{con} We always fix a nonzero highest weight vector $v_{c,h}$ in the Verma module $M(c,h)$ for each $c$ and $h$,
and identify $U(Vir^{-})$ with $M(c,h)$ by sending $e_{\mathbb{I}}$ to $e_{\mathbb{I}} v_{c,h}$.
\end{convention}

 We fix three $L(c_{1,q},0)$-modules $L(c_{1,q},h_{i_{n},s_{n}})$ ($n=1,2,3$), where $i_{n}>0$, $0< s_{n}\leq q$,
and assume that there exists $i\in A_{i_{1},i_{2}}$, $s\in A_{s_{1},s_{2}}$, satisfying $h_{i_{3},s_{3}}=h_{i,s}$.
The purpose of this section is to construct a nonzero intertwining operator of type
(\begin{math}
\begin{array}{cccc}
\;\; L(c_{1,q},h_{i_{3},s_{3}})  \\
L(c_{1,q},h_{i_{1},s_{1}})\; L(c_{1,q},h_{i_{2},s_{2}})\\
\end{array}
\end{math}).
Set $c_{k}=c_{k,kq-1},$ $h_{n}^{k}=\frac{(i_{n}(kq-1)-s_{n}k)^{2}-(kq-1-k)^{2}}{4k(kq-1)}$.
By the fusion rules of $L(c_{k},0)$ when $k$ is large enough, 
there exists a nontrivial intertwining operator $\mathcal {Y}_{k}(\cdot, x)$ of type
\begin{math}
(\begin{array}{cccc}
\;\; L(c_{k},h_{3}^{k})  \\
L(c_{k},h_{1}^{k})\; L(c_{k},h_{2}^{k})\\
\end{array})
\end{math}.
Our method is to get the desired intertwining operator from the limit of $\mathcal {Y}_{k}(\cdot, x)$
 as $k$ approaches infinity. Hence from now on, we always assume that $k$ is large enough when needed.
 We say that a sequence of monomials $\{a_{k}x^{n_{k}}\}$ converges to the limit $ax^{n}$
 $$\lim_{k\rightarrow\infty}a_{k}x^{n_{k}}=ax^{n}$$
  if $\{a_{k}\}$
 and $\{n_{k}\}$ converge to the limits $a$ and $n$ respectively.
The following proposition is crucial for our construction.
 \begin{proposition}\label{finite}
As a left $A(L(c_{k},0))$-module, $A(L(c_{k},h_{n}^{k}))$ is generated by
$[v_{c_{k},h_{n}^{k}}],[L_{-1}v_{c_{k},h_{n}^{k}}],\cdots,[L_{-1}^{i_{n}s_{n}-1}v_{c_{k},h_{n}^{k}}]$.
\end{proposition}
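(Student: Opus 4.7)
The plan is to reduce the statement to a polynomial-division argument in the Frenkel--Zhu realization $A(M(c_{k},h_{n}^{k}))\cong\mathbb{C}[x,y]$ from Proposition \ref{ee}. By Proposition \ref{hehe}, $A(L(c_{k},h_{n}^{k}))$ is the quotient of this bimodule by the image of $A(M')$, where $M'$ is the maximal proper submodule of $M(c_{k},h_{n}^{k})$. Under the parameterization $t=(kq-1)/k$, $\alpha=i_{n}$, $\beta=s_{n}$ of Proposition \ref{rw}, the module $M'$ contains a singular vector $Sv_{c_{k},h_{n}^{k}}$ of weight $h_{n}^{k}+i_{n}s_{n}$ whose expansion in the PBW basis of $U(Vir^{-})$ has leading term $L_{-1}^{i_{n}s_{n}}$ with coefficient $1$.

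The heart of the argument is to show that, as an element of $\mathbb{C}[x,y]$, the class $[Sv_{c_{k},h_{n}^{k}}]$ is a polynomial in $y$ of degree exactly $i_{n}s_{n}$ with leading coefficient a nonzero constant. Propositions \ref{ee} and \ref{ff} produce the formulas $[L_{-1}w]=(x-y-L_{0}(w))[w]$ and $[L_{-2}w]=(-x+2y+L_{0}(w))[w]$ for homogeneous $w$, while Proposition \ref{dd} supplies the recursion $[L_{-j}w]=-2[L_{-j+1}w]-[L_{-j+2}w]$ for $j\geq 3$. An induction on $|\mathbb{I}|$ then produces the bound $\deg_{y}[e_{\mathbb{I}}v_{c_{k},h_{n}^{k}}]\leq\ell(\mathbb{I})$, where $\ell(\mathbb{I})=r_{1}+r_{2}+\cdots+r_{n}$ is the number of parts. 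For any $\mathbb{I}\neq(1^{i_{n}s_{n}})$ of level $i_{n}s_{n}$ one has $\ell(\mathbb{I})<i_{n}s_{n}$, whereas $[L_{-1}^{i_{n}s_{n}}v_{c_{k},h_{n}^{k}}]=\prod_{i=0}^{i_{n}s_{n}-1}(x-y-h_{n}^{k}-i)$ has $y$-degree exactly $i_{n}s_{n}$ with leading coefficient $(-1)^{i_{n}s_{n}}$. Consequently $[Sv_{c_{k},h_{n}^{k}}]=(-1)^{i_{n}s_{n}}y^{i_{n}s_{n}}+(\text{terms of strictly lower $y$-degree})$.

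Since the left and right actions of $[\omega]$ on $A(M(c_{k},h_{n}^{k}))$ are just multiplication by $x$ and by $y$, the sub-bimodule $A(M')$ is an ideal of $\mathbb{C}[x,y]$ containing the $y$-monic polynomial $[Sv_{c_{k},h_{n}^{k}}]$. Polynomial division then shows that every class in $A(L(c_{k},h_{n}^{k}))$ admits a representative of $y$-degree strictly less than $i_{n}s_{n}$, and hence lies in the left $\mathbb{C}[x]$-span of $\{1,y,\ldots,y^{i_{n}s_{n}-1}\}$. Because $[L_{-1}^{j}v_{c_{k},h_{n}^{k}}]$ is a polynomial of $y$-degree exactly $j$, this $\mathbb{C}[x]$-span equals the $\mathbb{C}[x]$-span of $\{[L_{-1}^{j}v_{c_{k},h_{n}^{k}}]:0\leq j<i_{n}s_{n}\}$, and since the left $A(L(c_{k},0))$-action factors through $A(V_{c_{k}})=\mathbb{C}[x]$, the same set generates $A(L(c_{k},h_{n}^{k}))$ as a left $A(L(c_{k},0))$-module.

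The main technical obstacle is the degree-counting inequality $\deg_{y}[e_{\mathbb{I}}v]\leq\ell(\mathbb{I})$; once this inductive lemma is established from the three identities above, the remainder of the proof is essentially polynomial bookkeeping.
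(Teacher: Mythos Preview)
Your proof is correct and follows essentially the same route as the paper. The only cosmetic difference is that the paper packages your three identities into the single closed formula $[L_{-n}v]=(-1)^{n}(ny-x+wt(v))[v]$ and tracks total degree (equivalently, the $(x-y)$--degree) rather than the $y$--degree; with either grading $[S_{i_{n}s_{n}}v_{c_{k},h_{n}^{k}}]$ is monic of degree $i_{n}s_{n}$ and the generation argument is identical.
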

\begin{proof}
Combining Proposition \ref{dd}, \ref{ee} and \ref{ff} implies the formula
$[L_{-n}v]=(-1)^{n}(ny-x+wt(v))[v]$ in $A(M(c_{k},h_{n}^{k}))$ for each homogenous $v\in L(c_{k},h_{n}^{k})$
(recall the identification $A(M(c_{k},h_{n}^{k}))=\mathbb{C}[x,y]$ in Proposition \ref{ee}).
From this formula and Proposition \ref{rw} we have
$$[L_{-1}^{m}v_{c_{k},h_{n}^{k}}]=(x-y)^{m}+ lower\ terms$$ and
$$[S_{i_{n}s_{n}}v_{c_{k},h_{n}^{k}}]=[v_{i_{n},s_{n}}]=(x-y)^{i_{n}s_{n}}+ lower\ terms$$
in $A(M(c_{k},h_{n}^{k}))\cong\mathbb{C}[x,y]$.
Now by Proposition \ref{hehe}, $[S_{i_{n}s_{n}}v_{c_{k},h_{n}^{k}}]$ lies in the kernel of the surjective morphism
 $A(M(c_{k},h_{n}^{k}))\twoheadrightarrow A(L(c_{k},h_{n}^{k}))$, hence $A(L(c_{k},h_{n}^{k}))$ can be generated,
 as a left $A(L(c_{k},0))$-module by
$$[v_{c_{k},h_{n}^{k}}],[L_{-1}v_{c_{k},h_{n}^{k}}],\cdots,[L_{-1}^{i_{n}s_{n}-1}v_{c_{k},h_{n}^{k}}]$$.
\end{proof}

Now we are well prepared for the construction.
It suffices to construct a
 bilinear pair $\lceil\cdot,\cdot\rfloor$ (with value in $\mathbb{C}\{x\}$) between
 $(L(c_{1,q},h_{i_{3},s_{3}}))^{\ast}$ and $L(c_{1,q},h_{i_{1},s_{1}})\otimes L(c_{1,q},h_{i_{2},s_{2}})$
  that satisfies the corresponding properties. The construction is divided into several steps.
  In the following using Convention \ref{con}, we always identify Verma modules $M(c,h)$ for different pairs of $\{c,h\}$.
  For a $Vir$--module $M$ of lowest weight $h$, we always use $M(n)$ to denote the weight subspace of weight $h+n$. The same notations are applied
  to submodules, quotient modules, and dual modules of $M$.

 \textbf{ Step 1}. Let $v'\in L(c_{1,q},h_{i_{3},s_{3}})^{\ast}(0)=M(c_{1,q},h_{i_{3},s_{3}})^{\ast}(0)=M(c_{k},h_{3}^{k})^{\ast}(0)$
 be defined by $v'(v_{c_{1,q},h_{i_{3},s_{3}}})=1$.
 For each homogenous $v_{1}\in M(c_{1,q},h_{i_{1},s_{1}})$
   and $v_{2}=v_{c_{1,q},h_{i_{2},s_{2}}}=v_{c_{k},h_{2}^{k}}$, $\lceil v',v_{1}\otimes v_{2}\rfloor$ is defined as follows:

  Set $a_{1}= v_{c_{1,q},h_{i_{1},s_{1}}}, a_{2}=L_{-1}v_{c_{1,q},h_{i_{1},s_{1}}},\cdots,
  a_{i_{1}s_{1}}=L_{-1}^{i_{1}s_{1}-1}v_{c_{1,q},h_{i_{1},s_{1}}}$
   in $M(c_{1,q},h_{i_{1},s_{1}})=M_{c_{k},h_{1}^{k}}$.
Then for each $k$, there is some $i$ such that $\langle v',\mathcal {Y}_{k}(a_{i}^{k}, x) v_{2}\rangle\neq0$,
otherwise, by Proposition \ref{ffff} and \ref{finite}, $\mathcal {Y}_{k}$ will be zero.
As $\langle v',\mathcal {Y}_{k}(a_{i+1}, x) v_{2}\rangle$
is the derivation of $\langle v',\mathcal {Y}_{k}(a_{i}, x) v_{2}\rangle$, hence
 $\langle v',\mathcal {Y}_{k}(a_{1}, x) v_{2}\rangle\neq0$ for each $k$.
By replacing each $\mathcal {Y}_{k}$ by a nonzero multiple,
there exists a subsequence of intertwining operators $\{\mathcal {Y}_{n_{k}}\}$  such that
$$\langle v',\mathcal {Y}_{n_{1}}(a_{1}, x)v_{2}\rangle, \langle v',\mathcal {Y}_{n_{2}}(a_{1}, x)v_{2}\rangle,
\cdots, \langle v',\mathcal {Y}_{n_{k}}(a_{1}, x) v_{2}\rangle, \cdots$$
converge to a nonzero monomial.
Now assume that the sequence
$$\langle v',\mathcal {Y}_{n_{1}}(v, x)v_{2}\rangle, \langle v',\mathcal {Y}_{n_{2}}(v, x)v_{2}\rangle,
\cdots, \langle v',\mathcal {Y}_{n_{k}}(v, x) v_{2}\rangle, \cdots$$
converges for a homogeneous $v\in M(c_{1,q},h_{i_{1},s_{1}})$.
Then the same result holds for $L_{0}v$ and $L_{-1}v$.
As the left action of $A(V_{c})$ on $A(M(c,h))$ is given by
$$[\omega][v]=[(L_{-2}+2L_{-1}+L_{0})v],$$
by the construction of the linear map $\pi$ in \S 3, we see that
$$\langle v',\mathcal {Y}_{n_{1}}(L_{-2}v, x)v_{2}\rangle, \langle v',\mathcal {Y}_{n_{2}}(L_{-2}v, x)v_{2}\rangle,
\cdots, \langle v',\mathcal {Y}_{n_{k}}(L_{-2}v, x) v_{2}\rangle, \cdots$$
also converges.
By induction and the equality $[L_{-n},L_{-1}]=(1-n)L_{-n-1}$,
the same is true for $L_{-n}v$ $(n>2)$.
As $M(c_{1,q},h_{i_{1},s_{1}})$ is generated by $a_{1}= v_{c_{1,q},h_{i_{1}}}$, we conclude by induction that for each homogeneous $v_{1}\in M(c_{1,q},h_{i_{1},s_{1}})$, the sequence
$$\langle v',\mathcal {Y}_{n_{1}}(v_{1}, x)v_{2}\rangle, \langle v',\mathcal {Y}_{n_{2}}(v_{1}, x)v_{2}\rangle,
\cdots, \langle v',\mathcal {Y}_{n_{k}}(v_{1}, x) v_{2}\rangle, \cdots$$
converges. Let $\lceil v',v_{1}\otimes v_{2}\rfloor$ be the limit
and Step 1 is complete.

\textbf{Step 2}. For any $v_{1}\in M(c_{1,q},h_{i_{1},s_{1}})$
   and $v_{2}\in M(c_{1,q},h_{i_{2},s_{2}})$ we want to define $\lceil v',v_{1}\otimes v_{2}\rfloor$ as the limit of the sequence
\begin{equation} \label{seq}
\langle v',\mathcal {Y}_{n_{1}}(v_{1}, x)v_{2}\rangle, \langle v',\mathcal {Y}_{n_{2}}(v_{1}, x)v_{2}\rangle,
\cdots, \langle v',\mathcal {Y}_{n_{k}}(v_{1}, x) v_{2}\rangle, \cdots
\end{equation}
 Thus we need to show that the limit of the sequence (\ref{seq}) exists for each $v_{1}\in M(c_{1,q},h_{i_{1},s_{1}})$
   and $v_{2}\in M(c_{1,q},h_{i_{2},s_{2}})$. Step 1 shows that when $v_{2}=v_{c_{1,q},h_{i_{2},s_{2}}}$, the limit exists.
   Thus by induction, it suffices to prove that if the limit of the sequence (\ref{seq}) exists for any $v_{1}\in M(c_{1,q},h_{i_{1},s_{1}})$
   and a fixed $v_{2}\in M(c_{1,q},h_{i_{2},s_{2}})$, then the same is true with $v_{2}$ replaced by $L_{n}v_{2}$ $(n>0)$.
  But this follows directly from the following identity:
  \begin{eqnarray*}
  \langle v',\mathcal {Y}_{k}(v_{1}, x) L_{n}v_{2}\rangle=\langle v',L_{n}\mathcal {Y}_{k}(v_{1}, x) v_{2}\rangle\\
 -\sum_{i=0}^{\infty}{n+1 \choose i}x^{n+1-i}\langle v',\mathcal {Y}_{k}(L_{i-1}v_{1}, x) v_{2}\rangle\\
 = -\sum_{i=0}^{\infty}{n+1 \choose i}x^{n+1-i}\langle v',\mathcal {Y}_{k}(L_{i-1}v_{1}, x) v_{2}\rangle.
  \end{eqnarray*}
Hence we conclude that the limit of the sequence (\ref{seq}) exists for any homogeneous $v_{2}\in M(c_{1,q},h_{i_{1},s_{1}})$.
 Set $$\lceil v',v_{1}\otimes v_{2}\rfloor=\lim_{k\rightarrow\infty}\langle v',\mathcal {Y}_{n_{k}}(v_{1}, x) v_{2}\rangle,$$
 and Step 2 is complete.

 \textbf{Step 3}. Now we want to define $\lceil v'_{3},v_{1}\otimes v_{2}\rfloor$ for any homogeneous $v_{1}\in M(c_{1,q},h_{i_{1},s_{1}})$,
   $v_{2}\in M(c_{1,q},h_{i_{2},s_{2}})$ and $v'_{3}\in L(c_{1,q},h_{i_{3},s_{3}})^{*}\subset M(c_{1,q},h_{i_{3},s_{3}})^{*}$.

\begin{lemma}\label{lili}
 For any $\mathbb{I}\in\mathcal {P}_{n}$,
 $\langle e_{\mathbb{I}}v',\mathcal{Y}_{n_{k}}(v_{1}, x)v_{2}\rangle$ converges to a finite limit as  $k$ approaches infinity.
\end{lemma}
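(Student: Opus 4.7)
The plan is to proceed by induction on the weight $n=|\mathbb{I}|$. The base case $n=0$ gives $e_{\mathbb{I}}v'=v'$, and convergence of $\langle v',\mathcal{Y}_{n_{k}}(v_{1},x)v_{2}\rangle$ for every $v_{1}\in M(c_{1,q},h_{i_{1},s_{1}})$ and every $v_{2}\in M(c_{1,q},h_{i_{2},s_{2}})$ is precisely the content of Step 2. The inductive hypothesis I will carry is that $\langle e_{\mathbb{J}}v',\mathcal{Y}_{n_{k}}(u_{1},x)u_{2}\rangle$ converges (monomial-wise) for every $\mathbb{J}\in\mathcal{P}_{m}$ with $m<n$ and every choice of $u_{1}\in M(c_{1,q},h_{i_{1},s_{1}})$, $u_{2}\in M(c_{1,q},h_{i_{2},s_{2}})$.

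For the inductive step, decompose $e_{\mathbb{I}}=L_{-r}\,e_{\mathbb{J}}$ with $r\geq 1$ and $\mathbb{J}\in\mathcal{P}_{n-r}$. The contragredient Virasoro action on $L(c_{1,q},h_{i_{3},s_{3}})^{*}$ (coming from the anti-involution $L_{n}\mapsto L_{-n}$) yields the adjoint relation $\langle L_{-r}u,w\rangle=\langle u,L_{r}w\rangle$. Combined with the commutator formula for intertwining operators
\[
L_{r}\,\mathcal{Y}_{n_{k}}(v_{1},x)=\mathcal{Y}_{n_{k}}(v_{1},x)\,L_{r}+\sum_{i=0}^{\infty}\binom{r+1}{i}x^{r+1-i}\,\mathcal{Y}_{n_{k}}(L_{i-1}v_{1},x),
\]
a direct consequence of the Jacobi identity, this produces
\[
\langle e_{\mathbb{I}}v',\mathcal{Y}_{n_{k}}(v_{1},x)v_{2}\rangle=\langle e_{\mathbb{J}}v',\mathcal{Y}_{n_{k}}(v_{1},x)L_{r}v_{2}\rangle+\sum_{i}\binom{r+1}{i}x^{r+1-i}\langle e_{\mathbb{J}}v',\mathcal{Y}_{n_{k}}(L_{i-1}v_{1},x)v_{2}\rangle.
\]
The sum is finite because $L_{i-1}v_{1}=0$ as soon as $i-1$ exceeds the level of $v_{1}$. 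Every term on the right pairs $e_{\mathbb{J}}v'$ (with $|\mathbb{J}|<n$) against $\mathcal{Y}_{n_{k}}$ applied to vectors of the two Verma modules, so by the inductive hypothesis each converges; multiplication by the fixed monomials $x^{r+1-i}$ and the addition of finitely many monomial-convergent sequences preserve convergence, which closes the induction.

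The only point that truly calls for care is the compatibility of the adjoint relation with the formal-series-valued pairing. This is mild: for homogeneous $v_{1},v_{2}$ each of the pairings above is a single monomial in $x$ apart from the global prefactor $x^{-h_{1}^{k}-h_{2}^{k}+h_{3}^{k}}$, which converges to $x^{-h_{i_{1},s_{1}}-h_{i_{2},s_{2}}+h_{i_{3},s_{3}}}$, and the adjoint identity on each fixed $L_{0}$-weight component reduces to finite-dimensional linear algebra. Thus the proof amounts to bookkeeping: peel off one $L_{-r}$ at a time from $e_{\mathbb{I}}v'$, push the corresponding $L_{r}$ through $\mathcal{Y}_{n_{k}}$ via the Jacobi commutator, and invoke the inductive hypothesis.
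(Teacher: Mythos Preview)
Your proof is correct and follows the same approach as the paper's: induction on the length (you use the weight $|\mathbb{I}|$, which works just as well) together with the contragredient identity $\langle L_{-r}w',w\rangle=\langle w',L_{r}w\rangle$. You have simply made explicit the step the paper leaves implicit, namely expanding $L_{r}\,\mathcal{Y}_{n_{k}}(v_{1},x)v_{2}$ via the Jacobi commutator so that the inductive hypothesis applies term by term.
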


\begin{proof}
By induction on the length of $\mathbb{I}$, this lemma follows directly from the formula
$$\langle L_{n}w',w\rangle=\langle w',L_{-n}w\rangle$$
for any $w'\in L(c_{k},h_{3}^{k})^{\ast}$ and $w\in L(c_{k},h_{3}^{k})$.
\end{proof}

   By Convention \ref{con}, we can identify $M(c_{1,q},h_{i_{3},s_{3}})^{*}$ with $M(c_{k},h_{3}^{k})^{*}$.
   Under this identification, $L(c_{k},h_{3}^{k})^{*}(n)$ converges to $L(c_{1,q},h_{i_{3},s_{3}})^{*}(n)$ as $k$ approaches infinity.
    As $L(c_{1,q},h_{i_{3},s_{3}})$ is irreducible, $L(c_{1,q},h_{i_{3},s_{3}})^{*}$
    is generated by $L(c_{1,q},h_{i_{3},s_{3}})^{*}(0)=\mathbb{C}v'$ as a module over $Vir^{-}$,
    thus we can choose a subset $\{\mathbb{I}_{1},\cdots,\mathbb{I}_{s}\} $  of $\mathcal {P}_{n}$ such that
     $e_{\mathbb{I}_{1}}v',\cdots,e_{\mathbb{I}_{s}}v'$ form a basis of $L(c_{1,q},h_{i_{3},s_{3}})^{*}(n)$.
     By our convention, both $L(c_{1,q},h_{i_{3},s_{3}})^{*}(n)$ and $L(c_{t_{k}},h_{3}^{k})^{*}(n)$
      are subspaces of $M(c_{1,q},h_{i_{3},s_{3}})^{*}(n)=M(c_{k},h_{3}^{k})^{*}(n)$.
      Moreover, by Proposition \ref{rw}, $$\lim_{k\rightarrow\infty}L(c_{t_{k}},h_{3}^{k})^{*}(n)=L(c_{1,q},h_{i_{3},s_{3}})^{*}(n).$$
      Hence it is easy to see that $ e_{\mathbb{I}_{1}}v',\cdots,e_{\mathbb{I}_{s}}v' $, as elements of $M(c_{k},h_{3}^{k})^{*}$,
      form a basis of $L(c_{t_{k}},h_{3}^{k})^{*}(n)$,
       and converge to $e_{\mathbb{I}_{1}}v',\cdots,e_{\mathbb{I}_{s}}v'$ respectively in $M(c_{1,q},h_{i_{3},s_{3}})^{*}$
       as $k$ approaches infinity.

    Now for any homogeneous $v'_{3}\in L(c_{1,q},h_{i_{3},s_{3}})^{*}(n)$
    we can choose a $v'_{3,k}\in L(c_{k},h_{3}^{k})^{\ast}(n)$, such that the sequence
    $$\cdots v'_{3,k}, v'_{3,k+1}, v'_{3,k+2}\cdots $$
    converges to $v'_{3}$. If we write $$v'_{3}= a_{1}e_{\mathbb{I}_{1}}v'+\cdots+a_{s}e_{\mathbb{I}_{s}}v',$$ and
    $$v'_{3,k}= a_{1,k}e_{\mathbb{I}_{1}}v'+\cdots+a_{s,k}e_{\mathbb{I}_{s}}v',$$ then, for each $i$ the sequence
    $$\cdots a_{i,k}, a_{i,k+1}, a_{i,k+2}\cdots$$ converges to $a_{i}$. By Lemma \ref{lili} we can set
    $$\lceil v'_{3},v_{1}\otimes v_{2}\rfloor=\lim_{k\rightarrow\infty}\langle v'_{3,n_{k}},\mathcal {Y}_{n_{k}}(v_{1}, x) v_{2}\rangle,$$
    and it is easy to see that this setting is independent of the choice of the sequence
    $$\cdots v'_{3,k}, v'_{3,k+1}, v'_{3,k+2}\cdots $$

 \textbf{Step 4}. Now we check that the pairing $\lceil\cdot,\cdot\rfloor$ induces an intertwining operator of type
 \begin{math}
(\begin{array}{cccc}
\;\; L(c_{1,q},h_{i_{3},s_{3}})  \\
M(c_{1,q},h_{i_{1},s_{1}})\; M(c_{1,q},h_{i_{2},s_{2}})\\
\end{array})
\end{math}.

It suffices to verify the $L_{-1}$-derivative formula
   $$\frac{d}{dx}\lceil v'_{3},v_{1}\otimes v_{2}\rfloor=\lceil v'_{3},L_{-1}v_{1}\otimes v_{2}\rfloor$$

and the Jacobi identity
 \begin{eqnarray*} \label{jac}
 x_{0}^{-1}\delta(\frac{x_{1}-x}{x_{0}}) \lceil  Y(e^{x_{1}L_{1}}(-x_{1}^{-2})^{L_{0}}v,x_{1}^{-1})v'_{3},v_{1}\otimes v_{2}\rfloor\\
-x_{0}^{-1}\delta(\frac{x-x_{1}}{-x_{0}})\lceil v'_{3}, v_{1}\otimes Y(v,x_{1})v_{2}\rfloor\\
=x^{-1}\delta(\frac{x_{1}-x_{0}}{x})\lceil v'_{3}, Y(v,x_{1})v_{1}\otimes v_{2}\rfloor,
\end{eqnarray*}
where $v\in V_{c_{1,q}}$
(this form of Jacobi identity follows from the graded dual module structure of $L(c_{1,q},h_{i_{3},s_{3}})^{*}$ defined in \S 5.2 of \cite{fhl}),
 and the truncation property follows as a consequence.

The $L_{-1}$-derivative formula follows directly from our definition of $\lceil\cdot,\cdot\rfloor$
and the fact that the derivation $\frac{d}{dx}$ commutes with the limiting operation.
In order to prove the Jacobi identity, we identify $V_{c_{1,q}}$ with $V_{c_{k}}$ by linear isomorphism
$V_{c_{1,q}}\rightarrow V_{c_{k}}$ which sending $\textbf{1}$ to $\textbf{1}$ and commutates with the action of $Vir^{-}$.
Then, it is easy to see that the coefficients of
$Y(v,x_{0})v_{1}$ (resp. $Y(v,x_{1})v_{2}$), as elements of $M(c_{k},h_{1}^{k})$ (resp. $M(c_{k},h_{2}^{k})$), converge to the corresponding coefficients of
$Y(v,x_{0})v_{1}$ (resp. $Y(v,x_{1})v_{2}$), as elements of $M(c_{1,q},h_{i_{1},s_{1}})$ (resp. $M(c_{1,q},h_{i_{2},s_{2}})$). If we choose,
as in Step 3,
a $v'_{3,k}\in L(c_{k},h_{3}^{k})^{\ast}$ for each $k$, such that the sequence
    $$\cdots v'_{3,k}, v'_{3,k+1}, v'_{3,k+2}\cdots $$
    converges to $v'_{3}$, then the coefficients of $Y(e^{x_{1}L_{1}}(-x_{1}^{-2})^{L_{0}}v,x_{1}^{-1})v'_{3,k}$ converge to the corresponding
    coefficients of $Y(e^{x_{1}L_{1}}(-x_{1}^{-2})^{L_{0}}v,x_{1}^{-1})v'_{3}$. Now the Jacobi identity of $\lceil\cdot,\cdot\rfloor$ follows from the
    Jacobi identities of $\mathcal {Y}_{k}$ and Step 4 is complete.

\textbf{Step 5}.  Show that $\lceil v'_{3},v_{1}\otimes v_{2}\rfloor=0$ if $v_{1}$ lies in the maximal proper submodule $M'(c_{1,q},h_{i_{1},s_{1}})$ of $M(c_{1,q},h_{i_{1},s_{1}})$.
Let $M'(c_{k},h_{1}^{k})$ be the maximal submodule of $M(c_{k},h_{1}^{k})$,
 then it is easy to see that $M'(c_{k},h_{1}^{k})(n)$ converges to $M'(c_{1,q},h_{i_{1},s_{1}})(n)$  for each $n$ as $k$ approaches infinity.
 Thus there exists
 a $v_{1,k}\in M'(c_{k},h_{1}^{k})$ for each $k$, such that the sequence
    $$\cdots v_{1,k}, v_{1,k+1}, v_{1,k+2}\cdots $$ converges to $v_{1}$. Using the argument in Step 3, we see that
     $$\lceil v'_{3},v_{1}\otimes v_{2}\rfloor=\lim_{k\rightarrow\infty}\langle v'_{3,n_{k}},\mathcal {Y}_{n_{k}}(v_{1,n_{k}}, x) v_{2}\rangle$$
     where  $$\cdots v'_{3,k}, v'_{3,k+1}, v'_{3,k+2}\cdots $$ is the sequence converging to $v'_{3}$ in Step 3.
     Since $\mathcal {Y}_{k}(\cdot, x)$ is an intertwining operator of type
\begin{math}
(\begin{array}{cccc}
\;\; L(c_{k},h_{3}^{k})  \\
L(c_{k},h_{1}^{k})\; L(c_{k},h_{2}^{k})\\
\end{array})
\end{math},  this forces $\lceil v'_{3},v_{1}\otimes v_{2}\rfloor=0$.

   \textbf{Step 6}. Show that $\lceil v'_{3},v_{1}\otimes v_{2}\rfloor=0$
    if $v_{2}$ lies in the maximal proper submodule $M'(c_{1,q},h_{i_{2},s_{2}})$ of $M(c_{1,q},h_{i_{2},s_{2}})$.
   It suffices to repeat the argument in Step 5 and we omit the details.

From the above construction we see that the pairing $\lceil\cdot,\cdot\rfloor$ induces a nonzero intertwining operator $\mathcal {Y}(\cdot, x)$ of type
 \begin{math}
(\begin{array}{cccc}
\;\; L(c_{1,q},h_{i_{3},s_{3}})  \\
L(c_{1,q},h_{i_{1},s_{1}})\; L(c_{1,q},h_{i_{2},s_{2}})\\
\end{array})
\end{math} such that $\lceil v'_{3},v_{1}\otimes v_{2}\rfloor=\langle v'_{3},\mathcal {Y}(v_{1}, x)v_{2}\rangle$.
This finishes our construction. Hence the proof of Theorem \ref{main} is complete.

\begin{remark}
The limit method is quite necessary, for we can not construct these intertwining operators by lattice vertex operator algebra as in \cite{dj1,mi}.
We hope to formalize this method and find more applications in future work.
\end{remark}

\section{Further extension}
From Section 2, we see that the Verma module $M(c_{1,q},h)$ is irreducible
if and only if $h\neq h_{i,s}=\frac{(iq-s)^{2}-(q-1)^{2}}{4q}$ for any $i>0$, $0< s\leq q$.
In this section, we consider the fusion rules of the type
 \begin{math}
 (\begin{array}{cccc}
\;\; M(c_{1,q},h')  \\
L(c_{1,q},h_{i_{1},s_{1}})\; M(c_{1,q},h)\\
\end{array})
\end{math}
and
 \begin{math}
 (\begin{array}{cccc}
\;\; M(c_{1,q},h)  \\
L(c_{1,q},h_{i_{1},s_{1}})\; L(c_{1,q},h_{i_{2},s_{2}})\\
\end{array})
\end{math}
where $M(c_{1,q},h)$ and $M(c_{1,q},h')$ are irreducible Verma modules and $i_{1},s_{1}, i_{2},s_{2}$ are as before.
 By Theorem 2.11 in \cite{li}, Frenkel-Zhu's formula holds in the first case. Hence the argument of Section 3 directly implies

 \begin{theorem}\label{main1}
\[
\mathcal {N}^{M(c_{1,q},h')}_{L(c_{1,q},h_{i,s}),M(c_{1,q},h)}\leq1
\]
where $i>0$, $0<s\leq q$,  and $M(c_{1,q},h)$ and $M(c_{1,q},h')$ are irreducible Verma modules.
 Set $h=\frac{s'^{2}-(q-1)^{2}}{4q}$ for some complex number $s'$. Then the equality holds if and only if
$h'=\frac{(jq-s'-t)^{2}-(q-1)^{2}}{4q}$ for some $j\in\{-i+1,-i+3,\cdots,i-1\} $ and $t\in\{-s+1,-i+3,\cdots,s-1\}$
\end{theorem}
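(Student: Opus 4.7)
The plan is to invoke Frenkel-Zhu's formula as an isomorphism in this setting---guaranteed by Theorem 2.11 of \cite{li} since the middle module $M(c_{1,q},h)$ is an irreducible Verma module---and then to rerun the computation of Section 3 with $L(c_{1,q},h_{i_2,s_2})$ and $L(c_{1,q},h_{i_3,s_3})$ replaced by the irreducible Verma modules $M(c_{1,q},h)$ and $M(c_{1,q},h')$ respectively. Because we have a genuine isomorphism here, rather than merely an injection as in the setting of Theorem \ref{main}, no symmetry argument is required and the polynomial condition from Section 3 dictates both the bound and the equality case.

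Concretely, the isomorphism identifies $\mathcal{N}^{M(c_{1,q},h')}_{L(c_{1,q},h_{i,s}),M(c_{1,q},h)}$ with the dimension of $\mathrm{Hom}_{A(V)}\bigl(A(L(c_{1,q},h_{i,s})) \otimes_{A(V)} M(c_{1,q},h)(0),\; M(c_{1,q},h')(0)\bigr)$. Both Verma bottom spaces are one-dimensional, so the dual of this Hom space is precisely the simultaneous $[\omega]$-eigenspace $H^{0}(\mathcal{L},L(c_{1,q},h_{i,s})^{*})^{(-h',-h)}$. By the argument of Section 3, the surjection $M(c_{1,q},h_{i,s}) \twoheadrightarrow L(c_{1,q},h_{i,s})$ induces an injection of this eigenspace into the $1$-dimensional eigenspace $H^{0}(\mathcal{L},M(c_{1,q},h_{i,s})^{*})^{(-h',-h)}$, which yields $\mathcal{N} \leq 1$ with equality if and only if the injection is surjective, i.e.\ iff $P_{i,s}(-h,\;-h'+h_{i,s};\;q) = 0$.

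The remaining step is to translate this polynomial condition into the form claimed for $h'$. Since $M(c_{1,q},h)$ is irreducible, $h$ is not of the shape $h_{i_2,s_2}$ for any positive integers $i_2,s_2$; we therefore parametrize $h$ by the complex number $s'$ via $h = \tfrac{s'^{2}-(q-1)^{2}}{4q}$ and formally treat $s'$ in place of $i_2 q - s_2$ in the Section 3 identity. The same polynomial manipulation then yields
\begin{equation*}
\prod_{k=0}^{i-1}\prod_{l=0}^{s-1}\Bigl(h' - \tfrac{((i-2k-1)q - (s-2l-1) + s')^{2}-(q-1)^{2}}{4q}\Bigr) = 0,
\end{equation*}
and setting $j = -(i-2k-1)$ and $t = -(s-2l-1)$ (which sweep $\{-i+1,-i+3,\ldots,i-1\}$ and $\{-s+1,-s+3,\ldots,s-1\}$) together with the invariance of the square under sign change recasts this as $h' = \tfrac{(jq-s'-t)^{2}-(q-1)^{2}}{4q}$ for some such $j,t$. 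The mildly subtle point---and the main thing to verify---is that the Section 3 identity, originally derived with integer $(i_2,s_2)$, extends to an arbitrary complex $s'$; since that identity is polynomial in all its arguments, this extension is automatic.
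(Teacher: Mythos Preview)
Your proposal is correct and follows exactly the approach the paper intends: invoke Theorem~2.11 of \cite{li} to upgrade the Frenkel--Zhu map to an isomorphism in this case, then rerun the Section~3 computation, so that the single condition $P_{i,s}(-h,\,-h'+h_{i,s};\,q)=0$ is both necessary and sufficient without any appeal to symmetry. The paper's own proof says precisely this in one sentence; your expansion of the polynomial translation (and the remark that the Section~3 identity, being polynomial in its arguments, extends from integer to complex $s'$) is the detail the paper omits.
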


Similarly, we have

 \begin{theorem}\label{main12}
\[
\mathcal {N}^{M(c_{1,q},h)}_{L(c_{1,q},h_{i_{1},s_{1}}),L(c_{1,q},h_{i_{2},s_{2}})}=0
\]
where $i_{1},i_{2}>0$, $0<s_{1}, s_{2}\leq q$ and $M(c_{1,q},h)$ is an irreducible Verma module.
\end{theorem}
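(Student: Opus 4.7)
The plan is to reuse the Section 3 machinery verbatim, now with $M(c_{1,q},h)$ playing the role of the target module. Since $M(c_{1,q},h)$ is irreducible by hypothesis, Proposition \ref{ffff} applies with $W_3=M(c_{1,q},h)$, giving an injection from the intertwining operator space into
\[
Hom_{A(L(c_{1,q},0))}\bigl(A(L(c_{1,q},h_{i_1,s_1}))\otimes_{A(L(c_{1,q},0))} L(c_{1,q},h_{i_2,s_2})(0),\ M(c_{1,q},h)(0)\bigr),
\]
so it suffices to show this Hom space vanishes.

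Next I would repeat the cohomological reduction of Section 3 with $h$ in place of $h_{i_3,s_3}$: the surjection $M(c_{1,q},h_{i_1,s_1})\twoheadrightarrow L(c_{1,q},h_{i_1,s_1})$ induces an injection into the one-dimensional simultaneous eigenspace $H^0(\mathcal{L}, M(c_{1,q},h_{i_1,s_1})^*)^{(-h,-h_{i_2,s_2})}$, whose image is nonzero precisely when the analogue of Equation (\ref{eq}) holds. The same polynomial manipulation carried out in Section 3 rewrites the obstruction as
\[
\prod_{k=0}^{i_1-1}\prod_{l=0}^{s_1-1}\bigl(h - h_{i_1+i_2-2k-1,\ s_1+s_2-2l-1}\bigr)=0.
\]

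The remaining, and only genuinely new, step is an arithmetic observation showing this equation is incompatible with the irreducibility of $M(c_{1,q},h)$. Since $h_{j,t}=\frac{(jq-t)^2-(q-1)^2}{4q}$ depends only on $(jq-t)^2$, for any integers $j,t$ one can always rewrite $h_{j,t}=h_{i',s'}$ with $i'>0$ and $1\leq s'\leq q$: setting $N=|jq-t|$ and using that every nonnegative integer admits a representation $N=i'q-s'$ with $i'\geq 1$ and $1\leq s'\leq q$ yields the claim. Irreducibility of $M(c_{1,q},h)$ then forces $h\neq h_{i',s'}$ for every such pair $(i',s')$, so each factor of the displayed product is nonzero, the product does not vanish, the Hom space is trivial, and the fusion rule equals $0$. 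The main (and rather mild) obstacle is this arithmetic reduction; everything else is a direct reread of Section 3.
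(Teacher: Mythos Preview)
Your argument is correct and follows exactly the route the paper intends by its ``Similarly, we have'': apply the Section~3 computation with $h$ in place of $h_{i_3,s_3}$, use the injectivity of $\pi$ (Proposition~\ref{ffff}, which suffices here since only an upper bound is needed), and observe that the factorised obstruction cannot vanish because every $h_{i_1+i_2-2k-1,\,s_1+s_2-2l-1}$ is of the form $h_{i',s'}$ with $i'>0$, $0<s'\le q$, while irreducibility of $M(c_{1,q},h)$ rules out $h=h_{i',s'}$. The only thing you add beyond the paper's one-word proof is the explicit arithmetic reduction $|jq-t|=i'q-s'$, which is indeed the point one has to check and which the paper leaves implicit.
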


\noindent {\bf Acknowledgments}

The author would like to express his deep gratitude to A. Milas for
valuable comments and suggestions about this paper.


\begin{thebibliography}{99}

\bibitem{am} D. Adamovi\'{c} and  A. Milas, \textsl{Logarithmic intertwining operators and $W(2,2p-1)$--algebras}, J. Math. Phys.,
 \textbf{48} (2007), 073503.

\bibitem{am1} D. Adamovic; A. Milas, \textsl{On the triplet vertex algebra $W(p)$}, Advances in Math. {\bf 217} (2008) 2664--2699.

\bibitem{dj1} C. Dong and C. Jiang, \textsl{A Characetrization of Vertex Operator Algebra $L(1/2, 0)\otimes L (1/2, 0)$},
Comm. Math. Phys. 296 (2010) 69--88.

\bibitem{ff1} B.L. Feigin; D.B. Fuks, \textsl{Representation of the Virasoro algebra}, in: Representations of Infinite
Dimensional Lie Groups and Lie Algebras, Gordon and Breach, 1989.

\bibitem{ff2} B.L. Feigin; D.B. Fuks, \textsl{Verma modules over the Virasoro algebra}, in: Lecture Notes in Math.,
Vol. 1060, 1984, 230--245.

\bibitem{fhl} I. Frenkel; Y.-Z. Huang; J. Lepowsky, \textsl{On axiomatic approaches to vertex operator
algebras and modules}, Mem. Amer. Math. Soc. {\bf 104} (1993).

\bibitem{fz} I.B. Frenkel; Y. Zhu, \textsl{ Vertex operator algebras associated to representations of affine and Virasoro
algebras}, Duke Math. J. {\bf 66} (1992) 123--168.

\bibitem{ik} Kenji Iohara; Yoshiyuki Kog, \textsl{Representation theory of the
Virasoro algebra}, Springer Monographs in Mathematics, Springer-Verlag (2011).

\bibitem{kr} V. Kac; A. Raina, \textsl{Bombay Lectures on Highest Weight Representations of Infinite-Dimensional
Lie Algebras}, in: Adv. Ser. Math. Phys., Vol. 2, World Scientific, Singapore, 1987.

\bibitem{mi} A. Milas, \textsl{Fusion rings for degenerate minimal models}, J. of Algebra {\bf 254} (2002), 300--335.

\bibitem{ll} J. Lepowsky; H. Li, \textsl{Introduction to Vertex Operator Algebras and Their Representations},
Progress in Mathematics, Vol. 227, Birkh\"{a}user, Boston, 2003

\bibitem{li}H. Li, \textsl{Determining fusion rules by $A(V)$-modules and bimodules},
J. of Algebra {\bf 212} (1999), 515--556.

\bibitem{lin}X. Lin, \textsl{ $ADE$ subalgebras of the triplet vertex algebra $\triplet$: $E_6$, $E_7$-series}, preprint.

\bibitem{rw} A. Rocha-Caridi; N. R. Wallach, \textsl{Charachters of irreducible representations of the Lie algebra of
vector fields on the circle}, Invent. Math. {\bf 72}, (1987), 57--75.

\bibitem{w} W.-Q. Wang, \textsl{Rationality of Virasoro vertex operator algebras}, Internat. Math. Res. Notices, {\bf 7} (1993), 197--211.
\end{thebibliography}
\end{document}